\newcommand{\R}{{\mathbb R}}
\newcommand{\N}{{\mathbb N}}
\newcommand{\Z}{{\mathbb Z}}
\def\1{{\mathbf 1}}
\def\Gn{\Gamma_n}
\DeclareMathOperator{\esssup}{ess\,sup}
\newtheorem {thm}{Theorem}[section]
\newtheorem {lem}[thm]{Lemma}
\newtheorem {cor}[thm]{Corollary}
\theoremstyle{defintion}
\newtheorem {df}[thm]{Definition}
\theoremstyle{rem}
\newtheorem{rem}[thm]{Remark}
\theoremstyle{example}
\newtheorem{ex}[thm]{Example}
\theoremstyle{assumption}
\def\E{\operatorname{\mathbb E}}
\def\P{\operatorname{\mathbb P}}
\def\T{{\mathbb T}}
\def\lbl{\label}
\def\be{\begin{equation}}
\def\ee{\end{equation}}
\def\p{\partial}
\def\Gn{\Gamma_n}
\def\un{\mathbf{u}_n}
\def\bu{\mathbf{u}}
\def\bv{\mathbf{v}}
\title{The continuum limit of the Kuramoto model on sparse random graphs}
\author{
Georgi S. Medvedev\thanks{Department of Mathematics, 
Drexel University, 3141 Chestnut Street, Philadelphia, PA 19104,
{\tt medvedev@drexel.edu}
} 
}
\begin{document}
\maketitle

\begin{abstract}
In this paper, we study convergence of coupled dynamical systems on convergent 
sequences of graphs to a continuum limit. We show that the solutions of the initial value
problem for the dynamical system on a convergent graph sequence tend to that for the nonlocal diffusion
equation on a unit interval, as the graph size tends to infinity. We improve our earlier results in
[Medvedev, The nonlinear heat equation on W-random graphs, Arch. Rational Mech. Anal., 212(3), pp. 781–803]
and extend them to a larger class of graphs, which includes directed and undirected,
sparse and dense, random and deterministic graphs.

There are three main ingredients of our approach. First, we employ a flexible framework 
for incorporating random graphs into the models of interacting dynamical systems, which fits 
seamlessly with the derivation of the continuum limit. Next, we prove the averaging principle
for approximating a dynamical system on a random graph by its deterministic (averaged) counterpart.
The proof covers systems on sparse graphs and yields almost sure convergence on time intervals
of order $\log n,$ where $n$ is the number of vertices. Finally, a Galerkin scheme is developed to show
convergence of the averaged model to the continuum limit. 

The analysis of this paper covers the Kuramoto model of coupled phase oscillators on a
variety of graphs including sparse Erd\H{o}s-R{\' e}nyi, small-world, and power law graphs.
\end{abstract}

\noindent\textbf{Keywords:}~  continuum limit, random graph, sparse graph,  graph limit, Galerkin method.

 % \begin{AMS} 34C15, 45J05, 45L05, 74A25, 05C90, 92D25
% \end{AMS}

\section{Introduction}
\setcounter{equation}{0}

Understanding principles of collective dynamics in large ensembles of
interacting dynamical systems is a fundamental problem in nonlinear science with applications
ranging from neuronal and genetic networks to power grids and the Internet. The key distinction
of coupled dynamical systems considered in this paper from classical spatially extended systems 
such as partial differential equations or lattice dynamical systems is that the spatial domain of 
the former class of models is a general graph. Given an enormous variety of graphs and their
complexity, analyzing dynamical systems on large and, in particular, on random graphs is a 
challenging problem.
% which received a great deal of attention in both theoretical and applied literature.

In \cite{Med14a, Med14b}, we initiated a study of the continuum limit of systems of coupled 
phase oscillators on convergent families of graphs. We used the fact that a large class of 
(dense) graphs, including many of those of interest in applications, can be conveniently described
analytically by a measurable function on a unit square, called a graphon \cite{LovGraphLim12}.
Roughly speaking, a graphon represents a limit of the adjacency matrix of a graph as its
size tends to infinity. Using graphons, we were able to derive and justify  the continuum limit
for the Kuramoto model (KM) on a great variety of graphs, which led to new studies of the KM on 
nontrivial graphs \cite{Med14c, MedTan15b, MedWri17, CMM18}. Importantly, the same approach can be 
successfully applied to justify the mean field limit for coupled dynamical systems 
on graphs \cite{ChiMed17a, KVMed18}.

The analysis in \cite{Med14a, Med14b} did not cover the KM on sparse graphs.
The progress in this direction became possible with the theory of $L^p$ graphons 
used to define graph limits for sparse graphs of unbounded degree \cite{BCCZ}. Using the 
insights from \cite{BCCZ}, we addressed the problem of the continuum limit of the KM
 on sparse graphs in  \cite{KVMed17}. 
While we were able to extend  many of our techniques to the KM on  a large class
of sparse graphs (including power-law graphs), some of the results in \cite{KVMed17} apply only 
to systems with linear diffusion. In the present work, we unify and, 
in the case of the KM on random graphs, significantly improve the results in 
\cite{Med14a, Med14b, KVMed17}. The contribution of this paper is twofold. First, we propose
a flexible framework for describing directed and undirected, sparse and dense, random and
deterministic graphs to be used in interacting dynamical systems models. 
This framework naturally leads to continuum models approximating dynamical systems on large 
graphs. Second, we refine our techniques 
to obtain stronger results on convergence to the continuum limit, which, in addition, apply to 
a wider class of graphs than in \cite{Med14b, KVMed17}. Even for the KM on dense graphs, our
results are much stronger:  we show convergence of solutions on the time intervals
of order $\log n$, compared to finite intervals in \cite{Med14b} 
\footnote{The very last step in the proof of Theorem~3.3 of \cite{Med14b} estimating 
$\P\left( \sup_{t\in [0,T]} \|z_n(t)\|^2_{2,n} > Cn^{-1} \right)$ is incorrect (see \cite{Med18}
for corrections).
% The proof can be rectified by discretizing the time interval $[0,T]$ and
% estimating the corresponding probability over appropriately chosen small subintervals of $[0,T]$. 
% Theorem~\ref{thm.ave} of the present paper achieves a much stronger result in a more elementary way.
}.
Furthermore, in the present work,
convergence is shown with probability $1$ versus convergence in probability in the earlier papers  
\cite{Med14b, KVMed17}. 
Finally, our results 
apply to the KM on sparse directed graphs, which have not been considered in 
\cite{Med14b, KVMed17}. Taken together, the results of this paper reveal a fuller potential of 
our method for proving convergence of discrete problems on graphs to a continuum limit.

As in \cite{Med14b}, the main result of this work is the proof of convergence of solutions
of the initial value problems (IVPs) for the KM on graphs to the solution of that  for the limiting 
nonlocal diffusion equation as the size of the graph tends to infinity. 
In its most basic version, the result may be seen as convergence of numerical discretization
of a nonlocal diffusion equation. The contribution of this paper, however, is much deeper and 
more interesting. For starters, we consider dynamical problems on random graphs. This 
situation is not treated in classical numerical analysis. More importantly, we use minimal regularity
assumptions on the limiting graphon $W$. The only assumption is that $W$ is a  square integrable 
function on a unit square. This allows us to treat a huge class of graphs 
and affords great flexibility in applications.  The fact that $W$ does not require any regularity 
beyond integrability means, in particular, that the order, in which vertices are sampled, is irrelevant.
Last but not least, the convergence problem analyzed in this work is motivated by concrete 
questions about the dynamics of large networks 
\cite{WilStr06, MedTan17, CMM18}.

There are three main ingredients in our proof of convergence. First, as we commented above, 
we construct convergent families of graphs in the spirit of W-random graphs \cite{LovSze06}. 
This description covers a broad class of graphs and  fits seamlessly with the 
analysis of convergence of the discrete models to the continuum limit.
In particular, the limit of  the graph sequence, given by a measurable real-valued
function $W$ on the unit square, is used later in the derivation of the continuum 
model as a kernel of a nonlocal diffusion term.  Many random 
graph models like small-world,
Erd\H{o}s-R{\' e}nyi, and even power law graphs have relatively simple graph limits,
 which makes the corresponding continuum models amenable to analysis 
\cite{Med14b, MedWri17, MedTan17}. 
The key tool for dealing with the models on random graphs is the
averaging principle, which justifies approximation of  a coupled system on a random graph 
by an averaged deterministic model on a complete weighted graph.
% In this paper,
% we strengthen the earlier variants of the averaging principle in \cite{Med14b, KVMed18},
% which results in improved error estimates.
Finally, the proof of convergence of the discrete deterministic models to the continuous 
one employs 
the interpretation of the discrete problems as Galerkin approximation of the continuum
limit (cf.~\cite{Eva-PDE}). The Galerkin method is used to show existence and uniqueness
of the weak solution of the IVP for the continuous problem. The fixed point argument used 
in \cite{Med14a, KVMed17} does not apply the more general problem considered in this paper.
% Besides, the Galerkin method is used to show convergence of the discrete problems to the 
% continuum limit. 

The organization of the paper is as follows. In the next section, we define convergent
graph sequences that are used in the remainder of this paper and formulate the 
KM on random graphs.
In Section~\ref{sec.main}, we state the main result about the convergence of the discrete model
on graphs to the continuum model. Here, we also explain the main steps of the proof.
 In Section~\ref{sec.average}, we prove the averaging principle, the first main ingredient of the 
proof of convergence to the continuum limit. It allows to approximate 
the KM  on a random graph by a deterministic model via averaging over all realizations of the 
random graph model. The averaged model then suggests the continuum limit in the 
form of a nonlinear nonlocal diffusion equation.
In Section~\ref{sec.continuum}, we introduce  Galerkin approximation of the continuum model
and state  Theorem~\ref{thm.main} about the convergence of the Galerkin scheme.
The use of the Galerkin method is twofold. First, it establishes the wellposedness of the 
IVP for the continuum model. Second, it is used to show convergence of the discrete models
to the continuum limit. This is the second ingredient of our method. Together with the averaging principle
and some auxiliary estimates, it implies the convergence of the KM on random graphs.
Section~\ref{sec.converge} presents the proof on the convergence of the Galerkin method.

\section{The KM on graphs}\lbl{sec.KM}
\setcounter{equation}{0}

Let $\Gn=\langle V(\Gn), E_d(\Gn), A_n\rangle$ be a weighted directed graph on $n$ nodes.
$V(\Gn)=[n]$ stands for the node set of $\Gn$.  $A_n=(a_{n,ij})$ is an
$n\times n$ weight matrix. The edge set
$$
E_d(\Gn)=\left\{ (i,j)\in [n]^2:\;a_{n,ij}\ne 0\right\}.
$$
An edge $(i,j)$ is an ordered pair of nodes. We will also use $j\to i$
to denote the edge $(i,j)$. Loops are allowed.

We will also consider undirected weighted graphs $\Gn=\langle V(\Gn),
E(\Gn), A_n\rangle$. In this case, $A_n$ is a symmetric matrix and the edges are 
unordered pairs of nodes
$$
E(\Gn)=\left\{ \{i,j\}\in [n]^2:\;a_{n,ij}\ne 0\right\}.
$$
We will use $i\sim j$ as a shorthand for $\{i,j\}\in E(\Gn)$.

 Consider a system of coupled oscillators on a sequence of weighted (directed or undirected) graphs 
$\Gn$
\begin{eqnarray}
\lbl{KM}
\dot u_{n,i} &=& f(u_{n,i},t) + (n\alpha_n)^{-1} \sum_{j=1}^n a_{n,ij} D(u_{n,j}-u_{n,i}),
\quad i\in [n],\\
\lbl{KM-ic}
u_{n,i}(0)&=&u_{n,i}^0. 
\end{eqnarray}
Here, $u_{n,i}:\R\to\T:=\R/2\pi\Z$ stands for the phase of oscillator $i\in [n]$ as a function of time.
$D$ is a $2\pi$-periodic Lipschitz continuous function, $\operatorname{Lip}(D)=L_D$. 
Without loss of generality,
\be\lbl{boundD}
\max_{u\in \T} |D(u)|=1.
\ee
Function 
$f(u,t)$ is a Lipschitz continuous in $u$, $\operatorname{Lip}_u(f)=L_u$,
and continuous in $t$. 
The sum on the right-hand side of \eqref{KM} models the interaction between oscillators.
Finally, unless otherwise specified $\alpha_n=1$. The scaling factor $\alpha_n$ will be
 needed for the KM on sparse random graphs, as explained below.

Equation \eqref{KM} generalizes the  original KM by allowing nonlinearity $f(u,t)$ and 
sequence $\{\Gamma_n\}$ as a spatial domain.
We are interested in the large $n$ limit of \eqref{KM}, \eqref{KM-ic}. One can expect a limiting 
behavior of solutions of \eqref{KM}, \eqref{KM-ic}, only if the graph sequence $\{\Gn\}$ 
has a well defined asymptotic behavior in the limit
as $n\to\infty$. We define  the asymptotic structure of $\{\Gn\}$  using function 
$W\in L^1(I^2)$, called a graphon. To define $\{\Gn\}$, we  discretize the unit interval 
by points $x_{n,j}=j/n, \; j\in \{0\}\cup [n]$ and denote
$
I_{n,i}:=(x_{n,i-1}, x_{n,i}], \; i\in [n].
$
We chose the uniform mesh $\{x_{n,i}, \; i=0,1,\dots,n\},\; n\in \N,$ for simplicity, as this is 
sufficient for the applications we have in mind. In general, any 
dense set of points from $[0,1]$ can be used. In particular, one could use random points sampled 
from the uniform distribution on $[0,1],$ as was done in \cite{Med14b}.

The following constructions are used to model a variety of dense and sparse, directed and
undirected, random and deterministic graphs. 
\begin{description}
\item[(DDD)] Deterministic directed graphs $\Gn=\langle V(\Gn), E_d(\Gn), A_n=(a_{n,ij})\rangle$:
\be\lbl{a-det}
a_{n,ij} =\langle W\rangle_{I_{n,i}\times I_{n,j}}:= n^2 \int_{I_{n,i}\times I_{n,j}} W(x,y)dxdy.
\ee
\item[(DDU)] If $W$ is a symmetric function, the same formula defines an undirected graph 
$\Gn=\langle V(\Gn), E(\Gn), A_n\rangle$.
\item[(RDD)] W-random graphs. Let $W: I^2\to I$ be a nonnegative measurable function.
$\Gn=G_d(n,W)$ is a directed random graph on $n$ defined as follows:
\be\lbl{Pdir}
\P\left( j\to i\right)=\langle W\rangle_{I_{n,i}\times I_{n,j}}.
\ee
\item[(RDU)] If $W$ is a symmetric function, define an undirected random graph $\Gn=G(n,W)$ as 
follows
\be\lbl{Pund}
\P\left( i \sim j\right)=\langle W\rangle_{I_{n,i}\times I_{n,j}}.
\ee
\item[(RSD)] Sparse directed W-random graph $\Gn=G_d(n,W,\alpha_n)$. Here, we assume that
$W\in L^1(I^2)$ is a nonnegative function and $1\ge \alpha_n\searrow
0$ such that $n\alpha_n\to \infty$ as $n\to\infty$. The probability 
of connection between two nodes is defined as follows
\footnote{Throughout this paper, $a\wedge b:=\min\{a,b\}$.}
 \be\lbl{Psparse-d}
\P\left( j\to i\right)=\alpha_n \langle \tilde W_n \rangle_{I_{n,i}\times I_{n,j}}, \quad 
\tilde W_n(x,y):= \alpha_n^{-1} \wedge W(x,y).
\ee
\item[(RSU)] The undirected sparse W-random graph $\Gn=G(n,W,\alpha_n)$ is defined in exactly the same way
\be\lbl{Psparse-und}
\P\left(  i\sim j\right)=\alpha_n \langle \tilde W_n \rangle_{I_{n,i}\times I_{n,j}}, 
\ee
assuming that $W$ is a symmetric nonnegative function.
\end{description}
In the KM \eqref{KM} on random graphs , we assume that $a_{n,ij}$ are
Bernoulli random variables with the probability of success defined by \eqref{Pdir}-\eqref{Psparse-und}.
For undirected graphs, we assume that $a_{n,ij}= a_{n,ji}$.
\begin{rem} 
The sequences of undirected graphs constructed above are  convergent
in the sense of convergence of dense graphs \cite{LovGraphLim12} and its generalization
to sparse random graphs of unbounded degree \cite{BCCZ}. In this paper, we will refer 
to any of the graph sequence constructed above as a convergent sequence of graphs. 
The graphon $W$ determines the asymptotic properties of each of these graph sequences.
For this reason, $W$ is called a graph limit.
\end{rem}

\begin{ex}\lbl{ex.graphs}
\begin{enumerate}
\item Sparse power law graph. 
Let $0<\beta<\gamma<1,$ $\alpha_n=n^{-\gamma}$ and
\be \lbl{Walpha}
W(x,y)=(1-\beta)^2 (xy)^{-\beta}.
\ee
Then the probability of connections in $\Gamma_n=G(n,W,\alpha_n)$ is given by
\be\lbl{Pedge-PL}
\P(i\sim j) =n^{-\gamma} \langle  n^\gamma \wedge W \rangle_{I_{n,i}\times I_{n,j}}.
\ee
The expected degree $\E \operatorname{deg}(i) = C(\beta,\gamma, n) i^{-\beta}$
for some positive constant  $C(\beta, \gamma, n)$ \cite[Lemma~2.2]{KVMed17}.
Thus, this is a power law graph. On the other hand the expected edge density 
is $O(n^{-\beta})$. Thus, $\{\Gamma_n\}$ is a sparse sequence.

If \eqref{Pedge-PL} is replaced by 
$$
\P(j \to i)= n^{-\beta} \langle  n^\beta \wedge W \rangle_{I_{n,i}\times I_{n,j}},
$$
we obtain a sequence of sparse directed graphs with power law distribution.
\item Sparse Erd\H{o}s-R{\' e}nyi graph. 
Let $\alpha_n=n^{-\gamma}, \; 0<\gamma<1$ and $W\equiv 1$. $\Gamma_n=G(n,W,\alpha_n)$
is a graph on $n$ nodes with the probability of edges being
\be\lbl{Pedge-ER}
\P(i\sim j)= n^{-\gamma}.
\ee
The expected value of the edge density in this case is $n^{-\gamma}$ and it is 
vanishing as $n\to\infty$. However, the expected degree $n^{1-\gamma}$
remains unbounded. 

If \eqref{Pedge-ER} is replaced by 
$$
\P(j \to i)=  n^{-\gamma}.
$$
we obtain a sequence of sparse directed Erd\H{o}s-R{\' e}nyi graphs.
\end{enumerate}
\end{ex}

Let $\Gamma_n=G(n,W,\alpha_n)$ be a random sparse directed graph (cf.~\textbf{(RSD)}).
The number of directed edges pointing to $i\in [n]$ is called an in-degree of $i$:
\be\lbl{in-d}
d^+_{n,i}= \sum_{j=1}^n \1_{\{j\to i\}}.
\ee
Similarly,
\be\lbl{out-d}
d^-_{n,i}= \sum_{j=1}^n \1_{\{i\to j\}}
\ee
is called an out-degree of $i\in [n]$.

From the definition of $\Gamma_n=G(n,W,\alpha_n)$, we immediately have
\begin{eqnarray}
\lbl{Ein}
\E d^+_{n,i}&=& \sum_{j=1}^n \alpha_n \langle \tilde W\rangle_{I_{n,i}\times I_{n,j}} =
\alpha_n n \int_0^1 \bar W_n (x,y)dy, \; x\in I_{n,i},\\
\lbl{Eout}
\E d^-_{n,i}&=& \sum_{j=1}^n \alpha_n \langle \tilde W\rangle_{I_{n,j}\times I_{n,i}} =
\alpha_n n \int_0^1 \bar W_n (y,x)dy, \; x\in I_{n,i},
\end{eqnarray}
where $\bar W_n=\sum_{i,j=1}^n  \langle \tilde W\rangle_{I_{n,i}\times I_{n,j}} \1_{I_{n,i}\times I_{n,j}}.$

The following assumptions will be needed below:
\begin{description}
\item[(W-1)]
$$
\sup_{n\in\N}\sup_{y\in I} \int_0^1 W_n(x,y)dx =:W_1<\infty,
$$
\item[(W-2)]
$$
\sup_{i\in [n]}\sup_{x\in I} \int_0^1  W_n(x,y)dy =:W_2<\infty,
$$
\end{description}
where
\be\lbl{step-W}
W_n(x,y)=\sum_{i,j=1}^n \langle W\rangle_{I_{n,i}\times I_{n,j}} \1_{I_{n,i}\times I_{n,j}}(x,y).
\ee.

Conditions \textbf{(W-1)} and \textbf{(W-2)} clearly imply
\be\lbl{W1-W2}
\sup_{n\in\N}\sup_{y\in I} \int_0^1 \bar W_n(x,y)dx \le W_1, \quad
\sup_{i\in [n]}\sup_{x\in I} \int_0^1 \bar W_n(x,y)dy \le W_2.
\ee

\begin{rem}\lbl{rem.degree}
Conditions \eqref{W1-W2} apply to undirected random graphs 
as well. In the undirected case, the two conditions are equivalent, since $W$ is a symmetric function.
Furthermore, by setting $\alpha_n\equiv 1$ and restricting to $W\in L^\infty(I^2)$, both conditions 
apply to directed and undirected dense W-random graphs. With these conventions, below  it
will be always assumed that conditions \eqref{W1-W2} hold for any of
the above types of graphs.
\end{rem}

Conditions \eqref{W1-W2} mean that the (in-) and out-degree of 
any node in $\Gamma_n$ are $O(\alpha_n n)$. The uniformity here is the key.
Both conditions clearly hold for all dense graphs 
(i.e., $W\in L^\infty(I^2)$) and many sparse graphs. For instance, sparse Erd\H{o}s-R{\' e}nyi 
and small-world graphs satisfy this condition. However, not every $\Gamma_n=G_d(n,W,\alpha_n)$ 
satisfies \eqref{W1-W2}. For instance, the power law graph defined in 
Example~\ref{ex.graphs} does not satisfy \eqref{W1-W2}. At the end of the next section, we 
show that the KM on the power law graphs, after a suitable rescaling of the coupling term, can 
still be analyzed with the techniques of this paper.

The nonnegativity assumption $W\ge 0$ is used for convenience and can be dropped. 
Indeed, writing $W=W^+-W^-$, assume that positive and negative parts of $W$,  $W^+$
and $W^-,$ satisfy \textbf{(W-1)} and \textbf{(W-2)}\footnote{It is
  sufficient to assume: 
$
\esssup_{x\in I} \int_I |W(x,y)| dy \le W_1,\quad
\esssup_{y\in I} \int_I |W(x,y)| dx \le W_2.
$
}. Then one can define graphs on $n$ nodes,
$\Gn^+$ and $\Gn^-$, whose edge sets are defined using the graphons $W^+$ and $W^-$
respectively. Thus, the original model can be rewritten as
\be\lbl{signedKM}
\dot u_{n,i} = f(u_{n,i},t) +(n\alpha_n)^{-1}\left(\sum_{j=1}^n a^+_{n,ij} D(u_{n,j}-u_{n,i})-
\sum_{k=1}^n a^-_{n,ik} D(u_{n,k}-u_{n,i})\right), \; i\in [n],
\ee
where $(a^+_{n,ij})$ and $(a^-_{n,ij})$ are weighted adjacency matrices of $\Gn^+$ and $\Gn^-$.
The derivation and analysis of the continuum limit for \eqref{KM} with nonnegative $W$
translates verbatim for \eqref{signedKM}. To simplify presentation, we restrict to
the case of nonnegative $W$.

\section{The main result} \lbl{sec.main}
\setcounter{equation}{0}

Having defined the discrete model \eqref{KM}, \eqref{KM-ic}, we now present the 
main result of this work. Our goal is to describe the limiting behavior of the coupled system as 
$n\to\infty$. Specifically, we are going to compare the solutions of the discrete model 
\eqref{KM}, \eqref{KM-ic} for large $n$ with the solution of the IVP for the continuum model
\begin{eqnarray}\lbl{clim}
\p_t u(t,x) &=& f(u,t) +\int_I W(x,y) D\left(u(t,y)-u(t,x)\right) dy, \quad x\in I,\\
\lbl{clim-ic}
u(0,x)&=& g(x).
\end{eqnarray}
For simplicity of exposition, we 
state our main result for the KM on a sparse directed graph. Clearly, the statement of the 
theorem translates easily to the 
KM on sparse undirected graphs, as well as both directed and undirected dense random graphs,
as explained in Remark~\ref{rem.degree}.

Below, we use the bold font to denote  $X$-valued functions. In particular,
$\mathbf{u}(t)$ stands for the map $t\mapsto u(t,\cdot)\in X$. Further, given the solution
of the IVP  for the  KM \eqref{KM}, \eqref{KM-ic} 
$u_n(t)=\left(u_{n,1}(t), u_{n,2}(t),\dots, u_{n,n}(t)\right),$ we define 
\be\lbl{un-step}
u_n(t,x) =\sum_{i=1}^n u_{n,i}(t) \phi_{n,i} (x),
\ee
where $\phi_{n,i}(x)=\1_{I_{n,i}}(x)$ is the characteristic function of $I_{n,i}$, $i\in [n]$.
The corresponding vector-valued function is denoted by $\mathbf{u}_n(t).$

\begin{thm}\lbl{thm.main}
Let $u_n(t)=\left(u_{n,1}(t), u_{n,2}(t),\dots, u_{n,n}(t)\right)$ be  the solution of the IVP for the 
KM \eqref{KM}, \eqref{KM-ic} on $\Gn=G_d(n,W, n^{-\gamma}),$ $0<\gamma<0.5,$ 
with nonnegative $W\in L^2(I^2)$ satisfying \textbf{(W-1)} and \textbf{(W-2)} and subject 
to the initial condition \eqref{KM-ic} with
$$
u^0_{n,i}=\langle g\rangle_{I_{n,i}},\quad i\in [n],
$$
and $g\in L^2(I).$ 

Then for any $T>0$,
$$
\lim_{n\to\infty} \| \mathbf{u}_n-\mathbf{u} \|_{C(0,T; L^2(I))}=0\quad \mbox{a.s.},
$$
where $\bu(t)$ is the solution of the IVP for the continuum limit \eqref{clim}, \eqref{clim-ic}
and $\bu_n(t)$ is defined by \eqref{un-step}.
\end{thm}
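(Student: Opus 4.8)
The plan is to interpose a deterministic ``averaged'' model between the random KM and the continuum limit, and then estimate the two resulting discrepancies separately. Let $\bv_n(t)=(v_{n,1}(t),\dots,v_{n,n}(t))$ solve the averaged system obtained from \eqref{KM} by replacing each Bernoulli weight $a_{n,ij}$ with its mean $\E a_{n,ij}=\alpha_n\langle \tilde W_n\rangle_{I_{n,i}\times I_{n,j}}$, subject to the same initial data, and let $\bv_n(t,x)$ be its step-function interpolant as in \eqref{un-step}. By the triangle inequality it suffices to show
\be
\|\un-\bv_n\|_{C(0,T;L^2(I))}\to 0 \quad\text{a.s.}, \qquad \|\bv_n-\bu\|_{C(0,T;L^2(I))}\to 0,
\ee
the first being the averaging principle of Section~\ref{sec.average} and the second the Galerkin convergence of Section~\ref{sec.continuum}.

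For the deterministic discrepancy, I would observe that the averaged model is exactly the Galerkin projection of the nonlocal diffusion equation with kernel $\tilde W_n$ onto $\operatorname{span}\{\phi_{n,i}\}$: indeed $(n\alpha_n)^{-1}\sum_j \E a_{n,ij}\,D(\cdot)=n^{-1}\sum_j\langle\tilde W_n\rangle_{I_{n,i}\times I_{n,j}} D(\cdot)$ is precisely the $i$-th coefficient of the $L^2$-projection of $\int_I \tilde W_n(x,y)D(u(y)-u(x))\,dy$ evaluated on piecewise constant $u$. Since $W\in L^2(I^2)$ and the truncation level $\alpha_n^{-1}=n^\gamma\to\infty$, dominated convergence gives $\tilde W_n\to W$ in $L^2(I^2)$; likewise $u^0_{n,i}=\langle g\rangle_{I_{n,i}}$ yields step functions converging to $g$ in $L^2(I)$. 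Invoking the Galerkin convergence theorem of Section~\ref{sec.continuum} (which also supplies existence, uniqueness and the requisite a priori bounds for the weak solution $\bu$) then gives $\|\bv_n-\bu\|_{C(0,T;L^2(I))}\to 0$.

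For the averaging principle, set $z_{n,i}=u_{n,i}-v_{n,i}$. Subtracting the two systems and using the Lipschitz bounds $\operatorname{Lip}_u(f)=L_u$, $\operatorname{Lip}(D)=L_D$ together with \eqref{W1-W2} to control the ``drift'' contributions by $z_{n,i}$ itself, the only term not absorbed is the centered fluctuation
\be
\xi_{n,i}(t)=(n\alpha_n)^{-1}\sum_{j=1}^n\big(a_{n,ij}-\E a_{n,ij}\big)\,D\big(v_{n,j}(t)-v_{n,i}(t)\big),
\ee
evaluated along the \emph{deterministic} trajectory $\bv_n$. This is the crucial structural point: for fixed $t$ the summands are independent, centered, and bounded in absolute value by $(n\alpha_n)^{-1}$, while the aggregate variance is $O\big((n\alpha_n)^{-1}\big)$ by \eqref{W1-W2}, so Bernstein's inequality yields $\P(|\xi_{n,i}(t)|>\varepsilon)\le \exp(-c\,\varepsilon^2 n\alpha_n)$. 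A Lipschitz-in-$t$ bound on $v_{n,j}-v_{n,i}$ lets me replace the supremum over $t\in[0,T]$ by a maximum over a polynomially fine time grid, and a union bound over $i\in[n]$ and over the grid gives a tail for $\max_i\sup_t|\xi_{n,i}|$ that is summable in $n$; Borel--Cantelli then upgrades the conclusion to almost sure convergence. Feeding $\max_i\sup_t|\xi_{n,i}|$ into a Gronwall estimate for $\tfrac1n\sum_i z_{n,i}^2$ closes the argument.

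The main obstacle is this last step. The delicate point is that the fluctuations scale like $(n\alpha_n)^{-1/2}=n^{(\gamma-1)/2}$, and to obtain almost sure convergence (rather than convergence in probability) uniformly over $[0,T]$ I must control all $n$ nodes simultaneously with tails sharp enough to survive the union bound and remain summable; the sparsity forces the variance up relative to the mean, and the truncation $\tilde W_n=\alpha_n^{-1}\wedge W$ contributes an additional error, controlled for $W\in L^2$ by $\|W\1_{\{W>n^\gamma\}}\|_{L^1}\le\|W\|_{L^2}^2 n^{-\gamma}$. Balancing the Gronwall amplification over time intervals of order $\log n$ against these two effects is exactly what pins down the admissible range $0<\gamma<1/2$, and getting the quantitative bookkeeping right here is the heart of the proof.
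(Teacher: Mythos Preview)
Your high-level strategy coincides with the paper's: triangulate through the deterministic averaged model $\bv_n$, invoke the Galerkin analysis of Section~\ref{sec.continuum} for $\|\bv_n-\bu\|$ (via $\bar W_n\to W$ in $L^2(I^2)$), and prove an averaging principle for $\|\un-\bv_n\|$ by a Gronwall argument in which only the centered fluctuation $\xi_{n,i}(t)$ survives. The genuine difference is how you control $\xi_{n,i}$ uniformly in $t$. You propose Bernstein at each fixed $t$, then a polynomial time grid plus a Lipschitz-in-$t$ bound to pass to the supremum, and a union bound over nodes and grid points followed by Borel--Cantelli. The paper instead integrates \emph{before} concentrating: Gronwall gives $\sup_t\|\psi_n(t)\|_{2,n}^2\le\alpha_n^{-2}e^{LT}\int_0^\infty e^{-Ls}\|Z_n(s)\|_{2,n}^2\,ds$, and after expanding the square the time integral becomes a \emph{single} random variable $n^{-3}\sum_{i,k,l}c_{n,ikl}\,\eta_{n,ik}\eta_{n,il}$ with deterministic coefficients $|c_{n,ikl}|\le L^{-1}$; one Hoeffding bound on the diagonal sum and one on the off-diagonal sum, plus Borel--Cantelli, finish the job. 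The paper's device is slicker in that it eliminates the time discretization entirely; your route is more standard and also works, but be aware that the Lipschitz constant of $t\mapsto\xi_{n,i}(t)$ is \emph{a priori} of order $(n\alpha_n)^{-1}\sum_j|a_{n,ij}-\E a_{n,ij}|$, which is $O(1)$ only once you have the random degree bound $S_{n,i}=O(n\alpha_n)$ a.s.\ (cf.~\eqref{Ani}--\eqref{byBC}).

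One point to correct: you write that \eqref{W1-W2} controls the drift contributions. The drift term (the paper's $I_3$) carries the \emph{random} weights $a_{n,ij}$, so the relevant quantities are the random in- and out-degrees $(n\alpha_n)^{-1}\sum_j a_{n,ij}$ and $(n\alpha_n)^{-1}\sum_i a_{n,ij}$, not $n^{-1}\sum_j\bar W_{n,ij}$. Conditions \textbf{(W-1)}--\textbf{(W-2)} bound only the expectations; to get an almost-sure $O(1)$ bound you first need Hoeffding plus Borel--Cantelli on the degrees, exactly as in \eqref{Chernoff}--\eqref{byBC}. This same degree concentration is what you will need for the Lipschitz-in-$t$ estimate in your grid argument, so it should be established up front.
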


Theorem~\ref{thm.main} establishes convergence of the discrete models on graphs to the continuum
limit under the minimal assumptions on $W$. We only ask that the graphon $W\in L^2(I^2)$ satisfies
technical conditions \textbf{(W-1)} and \textbf{(W-2)}. This allows us to treat the KM on a variety 
of graphs in a uniform fashion. In particular, Theorem~\ref{thm.main} contains as special cases
convergence of the KM on dense deterministic and random graphs analyzed in 
\cite{Med14a, Med14b}, as well as convergence of the KM on sparse graphs considered in 
\cite{KVMed17}.
In the case of the KM on random graphs, in this paper the convergence is proved in the almost 
sure sense compared to the convergence in probability in \cite{Med14b, KVMed17}. In addition,
the setting of this paper includes the KM on directed graphs, while the even symmetry of $W$
was used in certain arguments in \cite{KVMed17}. All in all, the main result of this paper shows
convergence of the KM to the continuum limit in the stronger sense and for a more general
class of graphs than in the previous work on this subject.

The proof of Theorem~\ref{thm.main} follows the scheme developed in \cite{Med14b, KVMed17}.
The first step of the proof is estimating proximity between the solution on the IVP \eqref{KM}, 
\eqref{KM-ic} and that for the averaged equation:
\begin{eqnarray}
\lbl{aveKM}
\dot v_{n,i} &=& f(v_{n,i},t) + (n\alpha_n)^{-1} \sum_{j=1}^n \bar W_{n,ij} D(v_{n,j}-v_{n,i}),
\quad i\in [n],\\
\lbl{aveKM-ic}
v_{n,i}(0)&=&u_{n,i}^0. 
\end{eqnarray}
Here, we replaced $a_{n,ij}, i,j \in [n],$ with their expected values. In Theorem~\ref{thm.ave} 
below, we prove that the 
solutions of the original and averaged models with probability $1$ become closer and closer 
in the appropriate norm for increasing values of $n$. On the other hand, \eqref{aveKM} has the form 
of a cartesian discretization of the continuum limit \eqref{clim}. Thus, the second step in the 
proof is to show that the averaged model approximates the nonlocal equation \eqref{clim}.
This step is accomplished by showing that the averaged model is asymptotically equivalent to a 
Galerkin approximation of the continuum model. Here, we employ the corresponding argument 
from \cite{KVMed17}. 

The new challenges in implementing this plan are twofold. On the one hand, we significantly relaxed
the assumptions on $W$, compared to $W\in L^\infty(I^2)$ used in \cite{Med14a, Med14b}. On 
the other hand, we consider the nonlinear interaction function $D$ compared to the linear diffusion in
\cite{KVMed17}. To overcome these problems we refined our techniques. This includes 
the use of the concentration inequalities in the proof of Theorem~\ref{thm.ave} to obtain finer
estimates on the solutions of the averaged model, and the revision of the Galerkin scheme from
\cite{KVMed17} so that it covers the model with nonlinear interaction function $D$. While the overall
approach remains the same as in \cite{Med14b, KVMed17}, the analysis in the present paper 
reflects a better understanding of the method of the proof of convergence to the continuum limit, and 
fuller reveals its potential.

\section{Averaging}\lbl{sec.average}
\setcounter{equation}{0}

For KM on random graphs, the key step in the derivation of the continuum limit is the 
averaging procedure, when a stochastic model is approximated by a deterministic (averaged)
system. In this section, we focus on the justification of the averaging.

Throughout this section, we consider the KM on random graphs (cf.~\textbf{RDD},
\textbf{RDU}, \textbf{RSD}, \textbf{RSU}). Without loss of generality, we consider the KM
on a random sparse directed graph $\Gamma_n=G_d(n,W,\alpha_n),$ as it represents the 
most general case\footnote{For the KM on an undirected graph, assume, in addition,
that $W$ is symmetric. In the dense case, restrict to $0\le W\le 1$ and set $\alpha_n\equiv 1$.}.

For convenience, we rewrite the KM on $\Gamma_n=G_d(n,W,\alpha_n):$
\be\lbl{rKM}
\dot u_{n,i} = f( u_{n,i}, t) +{1\over \alpha_n n}\sum_{j=1}^n a_{n,ij} 
D(u_{n,j}- u_{n,i}),\quad i\in [n],
\ee

Taking the expected value of the right-hand side of \eqref{rKM} on $\Gn$
$$
\E a_{n,ij}= \P( j\to i) =\alpha_n \langle \tilde W_n\rangle_{I_{n,i}\times I_{n,j}},
$$
we arrive at the following
averaged model
\be\lbl{aKM}
\dot v_{n,i} = f(v_{n,i}, t) +{1\over n}\sum_{j=1}^n \bar W_{n,ij} D(v_{n,j}-v_{n,i}),\quad
i\in [n],
\ee
where 
$
\bar W_{n,ij}:=\langle \tilde W_n\rangle_{I_{n,i}\times I_{n,j}}.
$

To compare the solutions of the IVPs for the original and the averaged KMs, we adopt the 
discrete $L^2$-norm:
\be\lbl{dL2}
\| u_n-v_n \|_{2,n} =\left( n^{-1} \sum_{j=1}^n (u_{n,i}-v_{n,i})^2\right)^{1/2}.
\ee

\begin{thm}\lbl{thm.ave}
Let nonnegative $W\in L^1(I^2)$ satisfy \textbf{(W-1)}, \textbf{(W-2)},
and $\alpha_n=n^{-\gamma}$, $\gamma\in (0, 0.5),$ and
\be\lbl{def-L}
L=L_f+L_D\left(2+{3\over 2} W_1+{1\over 2}W_2\right)+{1\over 2}.
\ee
 Then for solutions of the 
original and averaged equations  \eqref{rKM} and \eqref{aKM} subject to the 
same initial conditions and any $T\le C\ln n$, $0\le C< (1-2\gamma)L^{-1}$,
 we have
\be\lbl{ave.statement}
\lim_{n\to\infty} \sup_{t\in [0,T]} \| u_n -v_n\|_{2,n} =0\quad \mbox{almost surely (a.s.)}.
\ee
\end{thm}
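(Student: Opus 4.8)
The plan is to control the discrete $L^2$ discrepancy $\rho_n(t):=\|u_n(t)-v_n(t)\|_{2,n}^2$ by an energy (Gr\"onwall) estimate, arranged so that only a single, genuinely random term survives, to which concentration inequalities can be applied. Writing $w_{n,i}=u_{n,i}-v_{n,i}$ (so $w_{n,i}(0)=0$) and subtracting \eqref{aKM} from \eqref{rKM}, I differentiate $\rho_n$ and pair the right-hand side with $w_{n,i}$. The nonlinearity contributes at most $L_f\rho_n$ by Lipschitz continuity. For the coupling I substitute $a_{n,ij}/\alpha_n=\bar W_{n,ij}+\xi_{n,ij}$, where $\xi_{n,ij}:=\alpha_n^{-1}a_{n,ij}-\bar W_{n,ij}$ are independent, mean zero, bounded by $\alpha_n^{-1}$, with $\mathrm{Var}(\xi_{n,ij})\le\alpha_n^{-1}\bar W_{n,ij}$. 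This splits the coupling difference into a deterministic part $n^{-1}\sum_j\bar W_{n,ij}\bigl(D(u_{n,j}-u_{n,i})-D(v_{n,j}-v_{n,i})\bigr)$ and a fluctuation part $n^{-1}\sum_j\xi_{n,ij}D(u_{n,j}-u_{n,i})$.

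The deterministic part is handled by $|D(u_{n,j}-u_{n,i})-D(v_{n,j}-v_{n,i})|\le L_D(|w_{n,i}|+|w_{n,j}|)$ together with \textbf{(W-1)}, \textbf{(W-2)}, which bound the row and column sums $n^{-1}\sum_j\bar W_{n,ij}\le W_2$ and $n^{-1}\sum_i\bar W_{n,ij}\le W_1$ uniformly in $n$; after Cauchy--Schwarz this term is $\le L_D(\tfrac32 W_1+\tfrac12 W_2)\rho_n$. This is exactly where \textbf{(W-1)}, \textbf{(W-2)} enter and where they guarantee that the accumulated constant $L$ in \eqref{def-L} is finite and independent of $n$. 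The fluctuation part is the crux, and the essential structural move is to decouple it from the graph randomness, since $u_n$ itself depends on the $a_{n,ij}$. I therefore write $D(u_{n,j}-u_{n,i})=D(v_{n,j}-v_{n,i})+\bigl(D(u_{n,j}-u_{n,i})-D(v_{n,j}-v_{n,i})\bigr)$. The second piece is again Lipschitz in $w$; paired with $w_{n,i}$ it is absorbed into $\rho_n$ using $|\xi_{n,ij}|\le\alpha_n^{-1}$ and the fact that the row sums $n^{-1}\sum_j|\xi_{n,ij}|$ concentrate (the empirical in-degrees cluster around an $O(1)$ value), contributing the remaining $2L_D$ to $L$.

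What is left is $\zeta_{n,i}(t):=n^{-1}\sum_j\xi_{n,ij}D(v_{n,j}(t)-v_{n,i}(t))$, whose coefficients are now \emph{deterministic}: for each fixed $t$ it is a normalized sum of independent, mean-zero, bounded random variables. Pairing it with $w_{n,i}$ and applying Young's inequality, $n^{-1}\sum_i w_{n,i}\zeta_{n,i}\le\tfrac12\rho_n+\tfrac12\|\zeta_n\|_{2,n}^2$ (the source of the final $+\tfrac12$ in \eqref{def-L}), yields
\begin{equation}
\dot\rho_n\le 2L\rho_n+\|\zeta_n(t)\|_{2,n}^2,\qquad \rho_n(0)=0,
\end{equation}
so that $\sup_{t\in[0,T]}\rho_n(t)\le e^{2LT}\int_0^T\|\zeta_n(s)\|_{2,n}^2\,ds$, all on the high-probability event where the degree sums are controlled.

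It remains to bound $\int_0^T\|\zeta_n(s)\|_{2,n}^2\,ds$ uniformly in time. For fixed $s$ and $i$, Bernstein's inequality applied to $\zeta_{n,i}(s)$ (variance proxy $\le W_2\,n^{\gamma-1}$, increments bounded by $\alpha_n^{-1}n^{-1}$) gives an exponential tail. To pass from fixed $s$ to $\sup_{s\in[0,T]}$ I would use that $v_n$, and hence $s\mapsto D(v_{n,j}(s)-v_{n,i}(s))$, is Lipschitz in time with an $n$-independent constant (the vector field is bounded), so that a polynomially fine time grid captures the supremum up to a negligible error; a union bound over the $n$ nodes and the grid then controls $\sup_{s,i}|\zeta_{n,i}(s)|$. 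The main obstacle, and the step that pins down the precise thresholds, is to choose the deviation level so that this failure probability is summable in $n$ (for the Borel--Cantelli upgrade to the almost-sure statement \eqref{ave.statement}) while simultaneously beating the exponential prefactor $e^{2LT}=n^{2LC}$. This balance is exactly what forces the horizon $T=O(\log n)$, the restriction $C<(1-2\gamma)L^{-1}$, and the range $\gamma\in(0,\tfrac12)$, i.e. $n\alpha_n^2=n^{1-2\gamma}\to\infty$, the minimal sparsity at which the fluctuation concentrates on this time scale. Carrying out this optimization carefully, rather than the preceding energy estimate, is where the real work lies.
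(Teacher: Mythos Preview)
Your energy estimate and decomposition are essentially the paper's: after recombining, your ``deterministic part'' together with the Lipschitz piece of your fluctuation is exactly the paper's term $I_3$ (weighted by $a_{n,ij}$ and controlled via in/out--degree concentration and Borel--Cantelli), and your $\zeta_{n,i}$ is the paper's $\alpha_n^{-1}Z_{n,i}$. The one genuine methodological difference is how the random forcing is handled after Gr\"onwall. The paper does \emph{not} use a time grid: it notes that only the scalar quantity $\int_0^\infty e^{-Ls}\|Z_n(s)\|_{2,n}^2\,ds$ is needed, and since the coefficients $b_{n,ij}(s)=D(v_{n,j}(s)-v_{n,i}(s))$ are deterministic, this integral expands as a single quadratic form $n^{-3}\sum_{i,k,l}c_{n,ikl}\,\eta_{n,ik}\eta_{n,il}$ with bounded deterministic coefficients $c_{n,ikl}=\int_0^\infty e^{-Ls}b_{n,ik}(s)b_{n,il}(s)\,ds$; it then splits into diagonal ($k=l$) and off-diagonal parts and applies Hoeffding once to each, followed by Borel--Cantelli. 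Your route---Bernstein at each fixed $(s,i)$, Lipschitz-in-time continuity of $\zeta_{n,i}$ on the degree-concentration event, a polynomial grid on $[0,T]$, and a union bound---is more laborious but has the virtue of invoking concentration only for \emph{linear} combinations of independent variables, and by exploiting the small variance $O(\alpha_n n^{-1})$ it can deliver a threshold at least as good as the stated one. Two minor bookkeeping points: your claim that the $\xi$--Lipschitz residual contributes exactly ``$2L_D$'' is not accurate, since the row/column sums $n^{-1}\sum_j|\xi_{n,ij}|$ are bounded by quantities involving $W_1,W_2$ (so your effective $L$ is somewhat larger than \eqref{def-L}); and the specific threshold $C<(1-2\gamma)L^{-1}$ in the statement comes from the paper's Hoeffding-based rate $\alpha_n^{-2}\int e^{-Ls}\|Z_n\|^2\,ds=O(\alpha_n^{-2}n^{-1})$, so your argument will produce a comparable but not literally identical constant.
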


\begin{proof}
Recall that $f(u,t)$ and $D$ are Lipschitz continuous function in $u$ with Lipschitz 
constants $L_f$ and $L_D$ respectively. In addition, $f(u,t)$ is a continuous function
of $t$ and $D(u)$ is $2\pi$--periodic function satisfying \eqref{boundD}.
 
Further,  $a_{n,ij},$ are Bernoulli random variables
\be\lbl{Bern}
\P(a_{n,ij}=1)=\alpha_n \bar W_{n,ij}.
\ee

Denote $\psi_{n,i}:=v_{n,i}- u_{n,i}.$ 
By subtracting (\ref{rKM}) from (\ref{aKM}), 
multiplying the result by $n^{-1}\psi_{n,i},$ and summing over $i\in [n]$, we obtain
\be \lbl{subtract}
\begin{split}
{1\over 2} {d\over dt} \|\psi_n\|^2_{2,n} &= \underbrace{
n^{-1}\sum_{i=1}^n \left(f(v_{n,i}, t)-f(u_{n,i}, t)\right) \psi_{n,i}}_{I_1}\\
&+
\underbrace{ n^{-2} \alpha_n^{-1}\sum_{i,j=1}^n
\left(\alpha_n W_{n,ij}-a_{n,ij}\right)
D(v_{n,j}-v_{n,i}) \psi_{n,i}}_{I_2}\\
&+
\underbrace{n^{-2} \alpha_n^{-1} \sum_{i,j=1}^n a_{n,ij} 
\left[ D(v_{n,j}-v_{n,i}) - D(u_{n,j}-u_{n,i})\right]\psi_{n,i}}_{I_3}\\
&=:I_1+I_2+I_3,
\end{split}
\ee
where $\| \cdot\|^2_{2,n}$ is the discrete $L^2$-norm (cf.~\eqref{dL2}).

Using Lipschitz continuity of $f$ in $u$, we  have
\be\lbl{I_1}
|I_1|\le L_f \|\psi_{n}\|_{2,n}^2.
\ee
Using Lipschitz continuity of $D$ and the triangle inequality, we 
have
\be\lbl{I_3}
\begin{split}
|I_3|& \le L_D n^{-2} \alpha_n^{-1} \sum_{i,j=1}^n a_{n,ij} \left( |\psi_{n,i}|+|\psi_{n,j}|\right) |\psi_{n,i}|\\
& \le L_D n^{-2} \alpha_n^{-1} 
\left({3\over 2} \sum_{i,j=1}^n  a_{n,ij} \psi_{n,i}^2 +{1\over 2} \sum_{i,j=1}^n  a_{n,ij} \psi_{n,j}^2
\right).
\end{split}
\ee

Choose $0<\delta<1-2\gamma$ and denote
\begin{eqnarray}
\lbl{Ani}
A_{n,i}&=&\left\{ S_{n,i}  \ge \alpha_n\sum_{j=1}^n\bar  W_{n,ij} + n^{{1+\delta\over 2}}\right\}, 
\quad n\in \N, \; i\in [n],\\
\lbl{An}
A_n&=& \bigcup_{i=1}^n A_{n,i},\quad n\in \N,
\end{eqnarray}
where
\be\lbl{def-Sni}
S_{n,i}=\sum_{j=1}^n a_{n,ij},\quad i\in [n].
\ee

Noting $0\le a_{n,ij}\le 1,\; \E S_{n,i}=\alpha_n \sum_{j=1}^n\bar W_{n,ij}$, we apply 
Chernoff-Hoeffding inequality\footnote{ 
Here and below, we are using 
$$
 \P\left(\sum_{i=1}^N X_i\ge \sum_{i=1}^N \E X_i +t\right)\le
e^{{-2t^2\over \sum_{i=1}^N (b_i-a_i)^2}},
\;\mbox{and}\;
\P\left(\left|\sum_{i=1}^N X_i - \sum_{i=1}^N \E X_i\right| \ge t\right)\le
2e^{{-2t^2\over \sum_{i=1}^N (b_i-a_i)^2}},
$$
which hold for collectively independent random variables 
$a_i\le X_i\le b_i,\; i\in [N]$ \cite{Hoeffding}.
} 
to bound
\be\lbl{Chernoff}
\P \left(A_{n,i}\right)\le  e^{-2 n^\delta}.
\ee
By the union bound,
\be\lbl{bound-An}
\P\left(A_n\right) \le ne^{-2 n^\delta}.
\ee
By Borel-Cantelli lemma, with probability $1$ there exists $N_\delta\in \N$ such that 
\be\lbl{byBC}
S_{n,i} <\alpha_n\sum_{j=1}^n\bar  W_{n,ij}+ n^{{1+\delta\over 2}}, 
\ee
for all $n\ge N_\delta$ and $i\in [n]$.
Below, we restrict to the subset of probability $1,$ where \eqref{byBC} holds.

With \eqref{byBC} in hand, we return to bounding the right--hand side of \eqref{I_3}
\be\lbl{uh-1}
\begin{split}
n^{-2} \alpha_n^{-1} \sum_{i,j=1}^n  a_{n,ij} \psi_{n,i}^2& \le 
n^{-1}\sum_{i=1}^n \left[ n^{{-1+\delta\over 2}}\alpha_n^{-1} +
n^{-1}\sum_{j=1}^n \bar  W_{n,ij}\right] \psi_{n,i}^2\\
\le \left(1+ W_1\right) \|\psi\|_{n,2}^2,
\end{split}
\ee
where we used $n^{{-1+\delta\over 2}}\alpha_n^{-1}=n^{{-1 +2\gamma+\delta\over 2}}\le 1$
and the definition of $W_1$ (cf.~ \eqref{W1-W2}).

Similarly,
\be\lbl{uh-2}
n^{-2} \alpha_n^{-1} \sum_{i,j=1}^n  a_{n,ij} \psi_{n,j}^2
\le \left(1+ W_2\right) \|\psi\|_{n,2}^2.
\ee
By plugging \eqref{uh-1} and \eqref{uh-2} into \eqref{I_3}, we have
\be\lbl{finish-I_3}
|I_3| \le L_D \left( 2 +{3\over 2} W_1 +{1\over 2} W_2 \right)
\|\psi\|_{n,2}^2.
\ee

It remains to bound $I_2$. To this end,  we will need the following definitions:
\begin{eqnarray*}
Z_{n,i}(t)   &=& n^{-1}\sum_{j=1}^n b_{n,ij}(t)\eta_{n,ij},\\
b_{n,ij}(t) &=& D\left(v_{n,j}(t)-v_{n,i}(t)\right),\\
\eta_{n,ij} &=&a_{n,ij} -\alpha_n\bar W_{n,ij},
\end{eqnarray*}
and $Z_n=(Z_{n,1}, Z_{n,2},\dots, Z_{n,n})$.
With these definitions in hand, we estimate $I_2$ as follows:
\be\lbl{est-I-1}
|I_2|= |n^{-1}\alpha_n^{-1}
\sum_{i=1}^n Z_{n,i}\psi_{n,i}|\le 2^{-1}\alpha_n^{-2} \|Z_n\|_{2,n}^2 +
2^{-1} \|\psi_n\|^2_{2,n}.
\ee

The combination of  \eqref{subtract}, \eqref{I_1}, \eqref{finish-I_3} and \eqref{est-I-1} yields
\be\lbl{pre-G-1}
{d\over dt} \|\psi_n(t)\|_{2,n}^2 \le L\|\psi_n(t)\|_{2,n}^2 +
{1\over \alpha_n^2} \|Z_n(t)\|_{2,n}^2,
\ee
where $L$ is defined in \eqref{def-L}.

Using the Gronwall's inequality, we have
$$
\|\psi_n(t)\|_{2,n}^2\le  \alpha_n^{-2}e^{ Lt}\int_0^\tau e^{-Ls}\|Z_n(s)\|_{2,n}^2 ds.
$$
and
\be\lbl{Gron}
\sup_{t\in [0, T]} \|\psi_n(t)\|_{2,n}^2\le \alpha_n^{-2} e^{ L T}
\int_0^\infty e^{-Ls}\|Z_n(s)\|_{2,n}^2 ds.
\ee

Our next goal is to estimate $\int_0^\infty e^{-Ls}\|Z_n(s)\|_{2,n}^2 ds$.
To this end, note
\begin{eqnarray} \lbl{mean-0}
\E \eta_{n,ij} &=&  \E (a_{n,ij}-\alpha_n\bar W_{n,ij}) =0,\\
\lbl{eta2}
\E\eta_{n,ij}^2 &=&\E (a_{n,ij}-\alpha_n\bar W_{n,ij})^2= \alpha_n\bar W_{n,ij} -(\alpha_n\bar W_{n,ij})^2\le 1.
\end{eqnarray}

Further, 
\be\lbl{Zni2}
\int_0^\infty e^{-Ls} Z_{n,i}(s)^2 ds= n^{-2} \sum_{k,l=1}^n c_{n,ikl} \eta_{n,ik}\eta_{n,il}, 
\ee
where
\be\lbl{def-cnjikl}
c_{n,ikl}=\int_0^\infty e^{-Ls} b_{n,ik}(s) b_{nil}(s) ds \quad\mbox{and}\quad 
|c_{n,ikl}|\le L^{-1}. 
\ee
Further, from \eqref{Zni2} and \eqref{def-cnjikl}, we have
\be\lbl{Zni2-2}
\int_0^\infty e^{-Ls}  \|Z_{n}(s)\|_{2,n}^2 ds= 
n^{-3} \sum_{i,k,l=1}^n c_{n,ikl} \eta_{n,ik}\eta_{n,il}.
\ee

Our final goal is to bound the sum on the right--hand side of \eqref{Zni2-2}. To this end,
we write 
\be\lbl{split-in-2}
\sum_{i,k,l=1}^n c_{n,ikl} \eta_{n,ik}\eta_{n,il}
=\sum_{i,k=1}^n c_{n,ikk} \eta_{n,ik}^2 +
2\sum_{i=1}^n \sum_{1\le l<k\le n}  c_{n,ikl} \eta_{n,ik}\eta_{n,il}.
\ee
Both sums on the right--hand side of \eqref{split-in-2} are formed of independent
bounded random random variables.  By Chernoff-Hoeffding 
inequality, for an arbitrary $\delta>0$, we have
\begin{eqnarray}
\lbl{Chernoff-H-1}
 \P\left( \sum_{i,k=1}^n c_{n,ikk} \eta_{n,ik}^2\ge \sum_{i,k=1}^n c_{n,ikk} \E\eta_{n,ik}^2+n^2\right)
&\le& e^{-n^2L^2},\\
\lbl{Chernoff-H-2}
\P\left(\left|\sum_{i=1}^n \sum_{1\le l<k\le n}  c_{n,ikl} \eta_{n,ik}\eta_{n,il} \right|\ge 
n^{{3\over 2}+\delta} \right) &\le & 2 e^{-n^{2\delta} L^2},
\end{eqnarray}
where we used the bound on $c_{n,ikl}$ (see \eqref{def-cnjikl}).
By Borel-Cantelli lemma, we now have
\begin{eqnarray}
\lbl{BC-1}
 \sum_{i,k=1}^n c_{n,ikk} \eta_{n,ik}^2\le \sum_{i,k=1}^n c_{n,ikk} 
\E\eta_{n,ik}^2+n^2
&<& (L^{-1}+1)n^2,\\
\lbl{BC-2}
\left|\sum_{i=1}^n \sum_{1\le l<k\le n}  c_{n,ikl} \eta_{n,ik}\eta_{n,il} \right|
&< & n^{{3\over 2}+\delta}, 
\end{eqnarray}
for sufficiently large $n$ a.s.. Plugging in these bounds into \eqref{split-in-2}
and \eqref{Zni2-2}, we obtain
\be\lbl{bound-Zn}
\alpha_n^{-2} \int_0^\infty e^{-Ls} \| Z_n(s)\|_{2,n}^2 ds \le
\alpha_n^{-2} \left( (L^{-1}+1)n^{-1}+ 2n^{{-3\over 2}+\delta} \right) \le C_1 \alpha_n^{-2} n^{-1}
\ee
for some $C_1\ge 0$ a. s..

Using \eqref{bound-Zn}, from \eqref{Gron} we have
\be\lbl{final-bound}
\sup_{t\in [0, T]} \|\psi_n(t)\|_{2,n}^2\le C_1 e^{ L T} \alpha_n^{-2} n^{-1}.
\ee
For $\alpha_n=n^{-\gamma}, \; 0<\gamma<{1\over 2}$ the right--hand
side of \eqref{final-bound} tends to zero on the time interval
with $T \le C \ln n$ for any $0<C<{1-2\gamma\over L}.$
\end{proof}

If we restrict to finite time intervals then \eqref{final-bound} yields
the rate of convergence estimate.
\begin{cor} \lbl{cor.finite-T} For fixed $T>0,$ we have
\be\lbl{finite-T}
\lim_{n\to\infty} n^{{1\over 2}-\gamma-\delta}\sup_{t\in [0, T]} 
\|\psi_n(t)\|_{2,n}=0\quad\mbox{a.s.},
\ee
where $0<\delta<{1\over 2}-\gamma$ is arbitrary.
\end{cor}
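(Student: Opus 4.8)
The plan is to read off the convergence rate directly from the bound \eqref{final-bound} obtained in the proof of Theorem~\ref{thm.ave}, which holds on an event of probability one for all sufficiently large $n$. The essential observation is that for a \emph{fixed} horizon $T$ the prefactor $e^{LT}$ is merely a constant, so the entire $n$-dependence of the right-hand side of \eqref{final-bound} is carried by $\alpha_n^{-2} n^{-1}$.

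First I would substitute $\alpha_n = n^{-\gamma}$, so that $\alpha_n^{-2} n^{-1} = n^{2\gamma - 1}$ and \eqref{final-bound} becomes
\[
\sup_{t\in [0, T]} \|\psi_n(t)\|_{2,n}^2 \le C_1 e^{LT} n^{2\gamma - 1}
\]
almost surely for all large $n$. Taking square roots gives $\sup_{t\in[0,T]}\|\psi_n(t)\|_{2,n} \le \sqrt{C_1}\,e^{LT/2}\,n^{\gamma - 1/2}$.

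Next I would multiply through by the scaling factor $n^{1/2 - \gamma - \delta}$. Since the exponents add to $(1/2 - \gamma - \delta) + (\gamma - 1/2) = -\delta$, this yields
\[
n^{1/2 - \gamma - \delta}\sup_{t\in [0, T]} \|\psi_n(t)\|_{2,n} \le \sqrt{C_1}\,e^{LT/2}\,n^{-\delta},
\]
and because $\delta > 0$ by hypothesis the right-hand side tends to $0$ as $n\to\infty$. This is exactly \eqref{finite-T}.

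I do not expect any genuine obstacle here: all the probabilistic content---the Chernoff--Hoeffding estimates and the two Borel--Cantelli arguments---has already been absorbed into \eqref{final-bound}, so the corollary reduces to bookkeeping with powers of $n$. The two points worth remembering are that \eqref{final-bound} is valid only on a probability-one event and only past a random (but a.s.\ finite) threshold, which is why the conclusion is phrased as an almost-sure limit rather than a deterministic one; and that the factor $n^{1/2-\gamma-\delta}$ is calibrated to cancel the decay $n^{\gamma-1/2}$ up to the residual $n^{-\delta}$, so that any $\delta>0$ below $1/2-\gamma$ works while $\delta=0$ would only give boundedness.
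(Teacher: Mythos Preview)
Your proposal is correct and follows exactly the route the paper intends: the corollary is stated immediately after \eqref{final-bound} with the remark that restricting to finite time intervals yields the rate estimate, and your argument---substituting $\alpha_n=n^{-\gamma}$, taking square roots, and multiplying by $n^{1/2-\gamma-\delta}$---is precisely that computation. There is nothing to add.
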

\begin{rem}\lbl{rem.dense}
Theorem~\ref{thm.ave} and Corollary~\ref{cor.finite-T}
clearly apply to the KM on undirected sparse graphs. Furthermore, by setting $\gamma=0$, 
these results translate to the KM on dense W--random graphs.
\end{rem}

\begin{rem}
As we pointed out earlier, not every sparse random graph defined in (\textbf{RSD}, \textbf{RSU})
meets \eqref{W1-W2}. However, the averaging can still be justified for the 
KM on such graphs if the original model is suitably rescaled. For simplicity, we explain the new
scaling for the KM on undirected graphs. 
 
Let $\Gn=G(n,W,\alpha_n),$ where $W\in L^1(I^2)$ is a symmetric nonnegative function and 
$\alpha_n\searrow 0,$ $\alpha_nn\to \infty$ as before. Consider
\be\lbl{dKM}
\dot u_{n,i} = f(u_{n,i},t) + d_{n,i}^{-1} \sum_{j=1}^n a_{n,ij} D(u_{n,j}-u_{n,i}),\quad i\in [n],
\ee
where $d_{n,i}:=d^+_{n,i}=d^-_{n,i}$ is a degree of node $i\in [n].$ We claim that the conclusion
of Theorem~\ref{thm.ave} holds for the rescaled model \eqref{dKM} for any nonnegative 
symmetric $W\in L^1(I^2)$. Indeed, the averaged system in this case takes the following form
\be\lbl{adKM}
\dot v_{n,i} = f(v_{n,i},t) + n^{-1} \sum_{j=1}^n U_{n,ij} D(v_{n,j}-v_{n,i}),\quad i\in [n],
\ee
where 
\be\lbl{defU}
U_{n,ij}={\bar W_{n,ij}\over n^{-1}\sum_{k=1}^n \bar W_{n,ki}}, \quad (i,j)\in [n]^2.
\ee
Using $\bar W_{n,ij}=\bar W_{n,ji}$ and \eqref{defU}, we have
$$
n^{-1}\sum_{k=1}^n U_{n,kj} =n^{-1}\sum_{k=1}^n U_{n,ik}=1\quad \forall i,j\in [n].
$$ 
Thus, the bounds in \eqref{uh-1} and \eqref{uh-2} 
hold with $W_1=W_2=1$. The rest of the proof remains unchanged.
\end{rem}

\section{The continuum limit}\lbl{sec.continuum}
\setcounter{equation}{0}

% Having approximated the KM on a random graph by the deterministic model  on a
% weighted complete graph,
% it remains to understand the continuum limit for the latter model. To this end, we consider
% the IVP \eqref{KM}-\eqref{a-det} on $\Gn=\langle V(\Gn), E_d(\Gn), A\rangle$.
% We will compare the solution of this IVP with the solution of the IVP for a nonlocal diffusion 
% equation 
% \begin{eqnarray}\lbl{clim}
% \p_t u(t,x) &=& f(u,t) +\int_I W(x,y) D\left(u(t,y)-u(t,x)\right) dy, \quad x\in I,\\
% \lbl{clim-ic}
% u(0,x)&=& g(x).
% \end{eqnarray}

We now turn to the IVP for the continuum model \eqref{clim}, \eqref{clim-ic}.
The solution of the IVP \eqref{clim}, \eqref{clim-ic} will be understood in a weak sense. 
Specifically,
let  $T>0$ and $X$ stand for $L^2(I)$.  Denote
\be\lbl{def-K}
K(u(t, \cdot)):= \int_I W(\cdot,y) D\left(u(t,y)-u(\cdot, t)\right)dy.
\ee
$K$ is viewed as an operator on $L^2(I)$.

\begin{df}\lbl{weak-solution} \cite{KVMed17}
$\mathbf{u}\in H^1(0,T; X)$ is called a weak solution of the IVP
(\ref{clim}), (\ref{clim-ic})
on $[0,T]$ if
\be\lbl{weak-sol}
\left( \mathbf{u^\prime}(t)- K(\mathbf{u}(t)) -f(\mathbf{u}(t),t), \mathbf{v}\right)=0\quad
\forall \mathbf{v}\in X
\ee
almost everywhere (a.e.) on $[0,T]$
and $\mathbf{u}(0)=g$.
\end{df}

The averaged equation \eqref{aKM} can be rewritten as a diffusion equation on $[0,1]$ for
the step function
\be\lbl{step-v}
v_n(t,x)  = \sum_{i=1}^n v_{n,i}(t) \phi_{n,i}(x),
\ee
where $\phi_{n,i}(x), i\in [n],$ is the step function defined right
after \eqref{un-step}.
Specifically, the IVP for \eqref{aKM} has the following form
\begin{eqnarray}
\lbl{dvn} 
\p_t v_n(t,x)& =& f(v_n(t,x), t) + \int_I \bar W_n(x,y) D\left(v_n(t,y)-v_n(t,x)\right) dy,\\
\lbl{dvn-ic}
v_n(0,x)& = & g_n(x),
\end{eqnarray}
where 
\begin{eqnarray}
\lbl{step-g}
g_n(x)&=&\sum_{i=1}^n g_{n,i} \phi_{n,i}(x),\quad g_{n,i}=\langle g\rangle_{I_{n,i}} :=
n\int_{I_{n,i}} g(x)dx,\\
\lbl{step-W}
\bar W_n(x,y) & = &\sum_{i,j=1}^n \bar W_{n,ij} \phi_{n,i}(x)\phi_{n,j}(y).
\end{eqnarray}

% The following theorem combined with Lemma~\ref{lem.ave} justifies using 
% \eqref{clim} as a continuum limit of the discrete KM \eqref{KM} on sparse directed graphs
% $\Gn$. 
% \begin{thm}\lbl{thm.main}
% For any $T>0$,
% $$
% \lim_{n\to\infty} \| \mathbf{v}_n-\mathbf{u} \|_{C(0,T; L^2(I))}=0.
% $$
% \end{thm}

The following theorem establishes the continuum limit for the IVP for the averaged equation
\eqref{dvn}, \eqref{dvn-ic}.

\begin{thm}\lbl{thm.clim}
Let $W\in L^2(I^2)$ satisfy \textbf{(W-1)}, \textbf{(W-2)} and $g\in L^2(I)$. 
Recall that $f(u,t)$ and $D(u)$ are Lipschitz 
continuous functions in $u$. In addition, $f(u,t)$ is a continuous function of $t$.

For $T>0$, there is a unique weak solution of the IVP \eqref{clim}, \eqref{clim-ic}.
Moreover,
$$
\lim_{n\to\infty} \| \mathbf{v}_n-\mathbf{u} \|_{C(0,T; L^2(I))}=0,
$$
where $\bu(t)$ is the solution of the IVP for the continuum limit \eqref{clim}, \eqref{clim-ic}
and $\mathbf{v}_n(t)=v_n(t,\cdot)$ is the solution of the IVP \eqref{dvn}, \eqref{dvn-ic}.
\end{thm}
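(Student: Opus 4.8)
The plan is to treat the step functions $\bv_n$ from \eqref{step-v} as a Galerkin/averaged approximation of the continuum problem on the piecewise-constant subspaces $X_n=\operatorname{span}\{\phi_{n,1},\dots,\phi_{n,n}\}\subset X=L^2(I)$, and to obtain existence, uniqueness, and convergence together through one Gronwall-type energy estimate. The enabling structural fact is that, under \textbf{(W-1)} and \textbf{(W-2)}, the nonlocal operator $K$ of \eqref{def-K} is globally Lipschitz on $X$. Indeed, writing $K(\mathbf{u})-K(\mathbf{w})$ and using $\operatorname{Lip}(D)=L_D$, one splits the integrand into a ``$y$-term'' $\int_I W(\cdot,y)|u(y)-w(y)|\,dy$ and an ``$x$-term''; the $x$-term is controlled by $W_2$, and the $y$-term by the Schur test $\sup_x\int_I W(x,y)\,dy\le W_2$, $\sup_y\int_I W(x,y)\,dx\le W_1$, giving $\|K(\mathbf{u})-K(\mathbf{w})\|_X\le L_D(\sqrt{W_1W_2}+W_2)\|\mathbf{u}-\mathbf{w}\|_X$. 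Together with the Lipschitz continuity of $f$ in $u$ and $|D|\le1$ (so that $\|K(\mathbf{u})\|_{\infty}\le W_2$), this shows the right-hand side of \eqref{clim} is a Lipschitz vector field on $X$, continuous in $t$. The analogous bounds hold verbatim for the averaged kernel $\bar W_n$ via \eqref{W1-W2}.

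First I would record uniform a priori bounds. Since the right-hand side of \eqref{dvn} maps $X_n$ into $X_n$ and is Lipschitz there, the finite system \eqref{aKM} has a unique global solution $\bv_n\in C^1(0,T;X_n)$ with $\bv_n(0)=g_n$. Testing \eqref{dvn} against $v_n$ and using the linear growth of $f$, $|D|\le1$, and \eqref{W1-W2} yields $\tfrac{d}{dt}\|\bv_n\|_X^2\le C(1+\|\bv_n\|_X^2)$, whence $\sup_{t\in[0,T]}\|\bv_n(t)\|_X$ is bounded independently of $n$ (note $\|g_n\|_X\le\|g\|_X$). This bound will absorb the nonlinearities in the error estimate below.

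The core of the argument is the energy estimate. For the difference $\mathbf{e}_{nm}=\bv_n-\bv_m$ I would subtract the two copies of \eqref{dvn}, test with $\mathbf{e}_{nm}$, and split the coupling difference into $\int_I\bar W_n(x,y)\big[D(v_n(y)-v_n(x))-D(v_m(y)-v_m(x))\big]\,dy$ plus the consistency term $\int_I\big(\bar W_n(x,y)-\bar W_m(x,y)\big)D(v_m(y)-v_m(x))\,dy$. The first piece is handled exactly as in the proof of Theorem~\ref{thm.ave}: Lipschitz continuity of $D$ and \eqref{W1-W2} produce a bound $\le L\|\mathbf{e}_{nm}\|_X^2$; the consistency piece is bounded, using $|D|\le1$ and Cauchy--Schwarz, by $\|\bar W_n-\bar W_m\|_{L^2(I^2)}$ uniformly in $t$. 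Adding the $f$-contribution gives $\tfrac{d}{dt}\|\mathbf{e}_{nm}\|_X^2\le C\|\mathbf{e}_{nm}\|_X^2+\rho_{nm}$ with $\rho_{nm}=C(\|\bar W_n-\bar W_m\|_{L^2(I^2)}^2+\|g_n-g_m\|_X^2)$, and Gronwall's inequality shows $\{\bv_n\}$ is Cauchy in $C(0,T;X)$, hence converges to some $\bu$. Passing to the limit in the nonlinear terms (justified by the Lipschitz bounds and the consistency $\bar W_n\to W$) identifies $\bu$ as a weak solution, and the same computation applied to the difference of two weak solutions, with $\rho\equiv0$, forces uniqueness. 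The convergence claim $\|\bv_n-\bu\|_{C(0,T;X)}\to0$ is then exactly the construction of $\bu$; quantitatively one repeats the estimate with $\mathbf{e}_n=\bv_n-\bu$ to get
\[ \frac{d}{dt}\|\mathbf{e}_n(t)\|_X^2 \le C\,\|\mathbf{e}_n(t)\|_X^2 + C\big(\|\bar W_n-W\|_{L^2(I^2)}^2+\|g_n-g\|_X^2\big). \]

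The step I expect to be the main obstacle is the consistency lemma $\|\bar W_n-W\|_{L^2(I^2)}\to0$ under the sole assumption $W\in L^2(I^2)$, since $\bar W_n$ is the cellwise average of the \emph{truncated} kernel $\tilde W_n=\alpha_n^{-1}\wedge W$. I would control it by the triangle inequality: letting $\Pi_n$ denote the $L^2(I^2)$-orthogonal projection onto functions constant on each cell $I_{n,i}\times I_{n,j}$ (a contraction), $\|\bar W_n-W\|_{L^2(I^2)}\le\|\Pi_n(\tilde W_n-W)\|_{L^2(I^2)}+\|\Pi_nW-W\|_{L^2(I^2)}\le\|\tilde W_n-W\|_{L^2(I^2)}+\|\Pi_nW-W\|_{L^2(I^2)}$. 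The first term tends to $0$ by dominated convergence, since $0\le\tilde W_n\uparrow W$ pointwise as $\alpha_n^{-1}\to\infty$ with dominating function $W\in L^2$; the second tends to $0$ because the projections onto the refining product partitions converge to the identity on $L^2(I^2)$. The convergence $\|g_n-g\|_X\to0$ follows identically from $\Pi_n\to\mathrm{id}$ on $L^2(I)$. The only remaining care is the uniformity of all error terms in $t\in[0,T]$, which is immediate here because the bound $|D|\le1$ decouples the consistency estimate from the solutions.
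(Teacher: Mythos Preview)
Your proposal is correct and takes a genuinely different route from the paper's own proof. The paper proceeds via compactness: it derives the same uniform a priori bounds on $\|\bv_n\|$ and $\|\bv_n'\|$, then invokes the Fr\'echet--Kolmogorov theorem to extract a subsequence converging in $L^2(0,T;X)$, uses weak compactness of the derivatives to identify $\bu'\in L^2(0,T;X)$, passes to the limit in the weak formulation \eqref{weak-sol}, and finally upgrades subsequential to full convergence via the uniqueness argument. Your argument bypasses all of this by exploiting directly that the right-hand side is a globally Lipschitz vector field on $X$: the Gronwall energy estimate on $\bv_n-\bv_m$ makes $\{\bv_n\}$ Cauchy in $C(0,T;X)$ outright, and since $\bv_n'$ equals a Lipschitz function of $\bv_n$ plus a consistency term vanishing with $\|\bar W_n-W\|_{L^2(I^2)}$, the derivatives converge as well, placing $\bu$ in $C^1(0,T;X)\subset H^1(0,T;X)$ without any weak-compactness detour. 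Your consistency argument for $\|\bar W_n-W\|_{L^2(I^2)}\to 0$ (projection contraction plus dominated convergence for $\tilde W_n\uparrow W$) is essentially the same as the paper's Lemma~\ref{lem.L2converge}, just phrased more cleanly. What your approach buys is simplicity and an immediate $C(0,T;X)$ topology for the convergence; what the paper's compactness route buys is a template that would survive in settings where the nonlinearity is not globally Lipschitz on $X$ and one must rely on a priori bounds rather than contraction. One small point worth making explicit in your write-up: conditions \textbf{(W-1)}, \textbf{(W-2)} are stated for the step kernels $W_n$, and you should note (via Lebesgue differentiation, since $\langle\int_I W(x,\cdot)\,dx\rangle_{I_{n,j}}\le W_1$ for all $n,j$) that they transfer to $W$ itself before invoking the Schur test on $K$.
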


Theorem~\ref{thm.clim} combined with Theorem~\ref{thm.ave} implies Theorem~\ref{thm.main},
which provides a rigorous justification for the continuum limit of the KM on sparse graphs.

\section{Proof of Theorem~\ref{thm.clim}}\lbl{sec.converge}
\setcounter{equation}{0}

In this section, we prove existence and uniqueness of solution of the IVP 
\eqref{clim}, \eqref{clim-ic}. We show that the solutions of the finite-dimensional
Galerkin problems converge to the unique weak solution of the IVP \eqref{clim}, 
\eqref{clim-ic}. The Galerkin problem, in turn, is very close to the IVP for the averaged
equation \eqref{dvn}, \eqref{dvn-ic}. Thus, convergence of Galerkin problems 
to the continuum limit \eqref{clim}, the main result of this section, almost 
immediately implies Theorem~\ref{thm.main}.
 
\subsection{Galerkin problems}

Recall
\be\lbl{def-phi}
\phi_{n,i}(x)=\1_{I_{n,i}}(x)=\left\{ \begin{array}{ll}
1, & x\in I_{n,i},\\
0, & x \not\in I_{n,i},
\end{array}\right.\; i\in [n],
\ee
and consider a finite dimensional subspace of $X$,
$X_n=\operatorname{span}\{\phi_{n,1}, \phi_{n,2}, \dots, \phi_{n,n}\}$.
We now consider a Galerkin approximation of the continuum problem 
\eqref{clim}, \eqref{clim-ic}:
\be\lbl{project} 
\left(\mathbf{u}_n^\prime(t)- K(\mathbf{u}_n(t))-f(\mathbf{u}_n(t)),
\phi \right)=0\quad \forall \mathbf{\phi}\in X_n, 
\ee
where
\be\lbl{project-ic} 
\mathbf{u}_n(0)=\sum_{i=1}^n g_{n,i} \phi_{ni}, 
\ee 

By plugging 
\be\lbl{Gsol}
\mathbf{u}_n(t)=\sum_{i=1}^n u_{n,i}(t)\mathbf{\phi}_{n,i}. 
\ee 
into \eqref{project} with $\phi:=\phi_{n,i}, \; i\in [n]$, we obtain the 
following system of equations for the coefficients $u_{n,i}(t)$:
\begin{eqnarray}\lbl{gKM}
\dot u_{n,i} &=& f(u_{n,i}, t) +{1\over n}\sum_{j=1}^n W_{n,ij} D(u_{n,j}-u_{n,i}),\quad i\in [n],\\
\lbl{gKM-ic}
u_{n,i}(0)&=& g_{n,i},
\end{eqnarray}
where $W_{n,ij}=\langle W\rangle_{I_{n,i}\times I_{n,j}}$ (cf.~\eqref{step-W}).

The following lemma shows wellposedness of the IVP for \eqref{clim}, \eqref{clim-ic}.
It also justifies using \eqref{clim} as the continuum limit for the KM \eqref{gKM}  on dense graphs 
(\textbf{DDD}, \textbf{DDU},\textbf{RDD}, \textbf{RDU}).
\begin{lem}\lbl{lem.converge} 
There is a unique weak solution of \eqref{clim}, \eqref{clim-ic},
$\mathbf{u}\in H^1(0,T;X).$
The solutions of the Galerkin problems \eqref{project},
\eqref{project-ic}, $\mathbf{u}_n$ converge to
$\mathbf{u}$ in the $L^2(0,T; X)$ norm as $n\to\infty$.
\end{lem}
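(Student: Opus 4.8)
The plan is to prove existence and uniqueness of the weak solution via the standard Galerkin machinery (cf.\ Evans), and then upgrade the natural $L^2(0,T;X)$ convergence of the Galerkin approximants. The first task is to establish uniform a priori bounds on the Galerkin solutions $\mathbf{u}_n$. These solutions exist globally as solutions of the finite ODE system \eqref{gKM}, \eqref{gKM-ic} because $f$ and $D$ are Lipschitz. The key estimate is obtained by testing \eqref{project} with $\phi=\mathbf{u}_n(t)$, which yields
\be\lbl{energy-est}
{1\over 2}{d\over dt}\|\mathbf{u}_n(t)\|_X^2 = \left(f(\mathbf{u}_n(t),t),\mathbf{u}_n(t)\right) + \left(K(\mathbf{u}_n(t)),\mathbf{u}_n(t)\right).
\ee
Using $|D|\le 1$ (cf.\ \eqref{boundD}), conditions \textbf{(W-1)}, \textbf{(W-2)}, the Cauchy--Schwarz inequality, and Lipschitz continuity of $f$ in $u$ together with continuity in $t$, both terms on the right are controlled by $C(1+\|\mathbf{u}_n(t)\|_X^2)$ with a constant independent of $n$. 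Gronwall's inequality then gives a uniform bound on $\sup_{t\in[0,T]}\|\mathbf{u}_n(t)\|_X$, and re-inserting this into \eqref{project} bounds $\mathbf{u}_n'$ uniformly in $L^2(0,T;X)$. Thus $\{\mathbf{u}_n\}$ is bounded in $H^1(0,T;X)$.

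Next I would extract a convergent subsequence and identify its limit as a weak solution. By weak compactness in the Hilbert space $H^1(0,T;X)$, a subsequence of $\mathbf{u}_n$ converges weakly to some $\mathbf{u}\in H^1(0,T;X)$, with $\mathbf{u}_n'\rightharpoonup\mathbf{u}'$ weakly in $L^2(0,T;X)$. The main step is passing to the limit in the nonlinear term $K(\mathbf{u}_n)$. Here I would use that $K$ is Lipschitz from $X$ to $X$: since $D$ is Lipschitz with constant $L_D$, one checks that $\|K(u)-K(v)\|_X \le L_D(W_1+W_2)^{1/2}(\cdots)\|u-v\|_X$ using \textbf{(W-1)}, \textbf{(W-2)}. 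To upgrade weak convergence to the strong $L^2(0,T;X)$ convergence needed for this continuity, I would test the equations for two indices $\mathbf{u}_n$, $\mathbf{u}_m$ against their difference and apply Gronwall to the resulting estimate for $\|\mathbf{u}_n-\mathbf{u}_m\|_X^2$, showing $\{\mathbf{u}_n\}$ is Cauchy in $C(0,T;X)$; this simultaneously delivers strong convergence and lets me pass to the limit in \eqref{project} to verify \eqref{weak-sol} for all $\phi$ in the dense union $\bigcup_n X_n$, hence for all $\mathbf{v}\in X$. The initial condition $\mathbf{u}(0)=g$ follows from \eqref{project-ic} since $g_n\to g$ in $X$.

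Uniqueness is the easiest part: given two weak solutions, subtract the weak formulations, test with their difference, and use the Lipschitz bound on $K$ and on $f$ together with Gronwall to conclude they coincide. Since the limit is unique, the whole sequence (not just a subsequence) converges, giving the asserted $L^2(0,T;X)$ convergence.

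\emph{The main obstacle} I anticipate is the passage to the limit in the nonlinear coupling term under only $L^2$ regularity of $W$ and no compactness gained from a smoothing operator (the equation is nonlocal, not diffusive, so there is no gain of spatial regularity). The Cauchy-in-$C(0,T;X)$ argument via Gronwall is what circumvents this: because $K$ is globally Lipschitz on $X$ and the Galerkin projections are orthogonal, the difference estimate closes directly without needing any compact embedding. The delicate points are verifying the uniform Lipschitz constant for $K$ using precisely \textbf{(W-1)} and \textbf{(W-2)} (rather than an $L^\infty$ bound on $W$), and handling the $t$-dependence of $f$, which only needs continuity in $t$ and enters the estimates benignly.
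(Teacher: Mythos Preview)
Your overall strategy is sound and, in fact, takes a somewhat different route from the paper. The paper obtains strong convergence of $\{\mathbf{u}_n\}$ in $L^2(0,T;X)$ by invoking the Fr\'echet--Kolmogorov compactness theorem (using the uniform bound on $\|\mathbf{u}_n'\|$ to get translation equicontinuity in $t$), then extracts a subsequential limit, identifies it as a weak solution, and finally uses uniqueness to upgrade to full-sequence convergence. Your proposal instead bypasses compactness entirely by showing directly that $\{\mathbf{u}_n\}$ is Cauchy in $C(0,T;X)$ via a Gronwall argument on $\|\mathbf{u}_n-\mathbf{u}_m\|^2$. This is arguably cleaner and more elementary: it exploits the global Lipschitz structure of $K$ and $f$ rather than any abstract compactness, and it yields convergence in $C(0,T;X)$ directly. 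The paper's compactness route, on the other hand, is closer to the standard Galerkin template and would be the natural choice if the nonlinearity were not globally Lipschitz.

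One point in your Cauchy argument deserves more care than you give it. The Galerkin equations \eqref{project} for $\mathbf{u}_n$ and $\mathbf{u}_m$ are posed against test functions in $X_n$ and $X_m$ respectively, and these spaces are \emph{not} nested for general $n,m$; so you cannot literally ``test both equations against $\mathbf{u}_n-\mathbf{u}_m$'' in the weak formulation. The fix is to work instead with the equivalent pointwise form \eqref{dun}, which holds as a genuine identity in $X$ (not merely against $X_n$). Subtracting the two equations then produces, beyond the Lipschitz-controlled terms, an extra piece
\[
\int_I \bigl(W_n(x,y)-W_m(x,y)\bigr)\,D\!\left(u_m(t,y)-u_m(t,x)\right)dy,
\]
whose $X$-norm is bounded by $\|W_n-W_m\|_{L^2(I^2)}$ thanks to \eqref{boundD}. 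Since $W_n$ is the $L^2$-projection of $W$ onto step functions, $\|W_n-W_m\|_{L^2(I^2)}\to 0$, and together with $\|g_n-g_m\|\to 0$ this closes the Cauchy estimate via Gronwall. You should make this projection-error term explicit; the phrase ``the difference estimate closes directly'' hides precisely this step. A minor related point: the Lipschitz constant for $K$ is most naturally expressed via $\|W\|_{L^2(I^2)}$ (as in the paper's uniqueness argument) rather than via \textbf{(W-1)}, \textbf{(W-2)}, though either works.
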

\begin{rem} 
Under additional condition $\int_I W(x,y)dy =1$ a.e. $x\in I$,
there exists  a unique strong solution of \eqref{clim}, \eqref{clim-ic},
$\mathbf{u}\in C^1(0,T;X)$ (cf.~\cite[Theorem~3.1]{KVMed17}).
\end{rem}

We rewrite \eqref{gKM}, \eqref{gKM-ic} as a nonlocal diffusion equation
\begin{eqnarray}\lbl{dun}
\p_t u_n(t,x)&=&f(u_n(t,x), t) + \int_I W_n(x,y) D\left(u_n(t,y)-u_n(t,x)\right) dy,\\
\lbl{dun-ic}
 u_n(0,x) &=& g_n(x),
\end{eqnarray}
where 
\be\lbl{step-u}
u_n(t,x)  = \sum_{i=1}^n u_{n,i}(t) \phi_{n,i}(x)
\ee
and $W_n$ is defined in \eqref{step-W}.

Throughout the remainder of this paper, $\|\cdot\|$ stands for the norm in $X=L^2(I)$.
Equation \eqref{step-u} establishes one-to-one correspondence between $u_n(t,\cdot)\in C(\R,X_n)$
and $u_n(t)=(u_{n,1}(t), u_{n,2}(t),\dots, u_{n,n}(t))\in C(\R,\R^n).$ Moreover, 
$\| u_n(t,\cdot)\|=\|u_n(t)\|_{2,n}$.

\begin{lem}\lbl{lem.Gexist}
For every $n\in N$, there exists a unique solution of the discrete problem 
\eqref{gKM}, \eqref{gKM-ic} defined on $\R$.
\end{lem}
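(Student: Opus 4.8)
The plan is to treat \eqref{gKM}, \eqref{gKM-ic} as an initial value problem for a system of $n$ ordinary differential equations in $\R^n$ and to invoke the standard existence--uniqueness and continuation theory. First I would write the system as $\dot u_n = F_n(u_n,t)$ with $F_{n,i}(u,t)=f(u_i,t)+n^{-1}\sum_{j=1}^n W_{n,ij} D(u_j-u_i)$. Here the coefficients $W_{n,ij}=\langle W\rangle_{I_{n,i}\times I_{n,j}}$ are finitely many fixed real numbers, finite because $W\in L^2(I^2)\subset L^1(I^2)$. Since $f(\cdot,t)$ and $D$ are Lipschitz continuous and $f$ is continuous in $t$, the map $F_n$ is continuous in $t$ and Lipschitz in $u$ uniformly on compact $t$-intervals: for $u,\tilde u\in\R^n$,
$$
|F_{n,i}(u,t)-F_{n,i}(\tilde u,t)|\le L_f|u_i-\tilde u_i|+L_D\, n^{-1}\sum_{j=1}^n |W_{n,ij}|\big(|u_j-\tilde u_j|+|u_i-\tilde u_i|\big),
$$
and summing over $i$ yields a global Lipschitz estimate. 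By the Picard--Lindel\"{o}f theorem, a unique local solution exists through the initial datum \eqref{gKM-ic} on a maximal interval of existence, and uniqueness is immediate from the Lipschitz property.

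Next I would rule out finite-time blow-up, so that the maximal interval is all of $\R$. The crucial observation is that the coupling term is \emph{bounded}: by the normalization \eqref{boundD}, $|D|\le 1$, whence $|n^{-1}\sum_{j=1}^n W_{n,ij} D(u_j-u_i)|\le n^{-1}\sum_{j=1}^n |W_{n,ij}|=:M_n<\infty$. The nonlinearity $f$ grows at most linearly in $u$, since Lipschitz continuity gives $|f(u_i,t)|\le |f(0,t)|+L_f|u_i|$, and $t\mapsto f(0,t)$ is continuous, hence bounded by some $C_T$ on any compact interval $[-T,T]$. Combining these, $|\dot u_{n,i}(t)|\le C_T+M_n+L_f|u_{n,i}(t)|$ on $[-T,T]$, so $|u_{n,i}(t)|\le |g_{n,i}|+\int_0^t (C_T+M_n+L_f|u_{n,i}(s)|)\,ds$, and Gronwall's inequality produces an a priori bound on $|u_{n,i}(t)|$ that is uniform over $[-T,T]$ and independent of the location of the endpoint of the maximal interval. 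The continuation theorem for ODEs states that a maximal solution reaching a finite time must have its norm diverge there; the a priori bound forbids this, so the solution extends to all of $\R$.

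I expect no genuine obstacle: this is a finite-dimensional, globally Lipschitz problem, and the only point requiring attention is global (as opposed to merely local) existence, which is secured entirely by the boundedness of $D$ in \eqref{boundD} — keeping the interaction term bounded irrespective of the size of the $u_{n,i}$ — together with the at-most-linear growth of $f$. Note that, for fixed $n$, the argument does not even require \textbf{(W-1)} or \textbf{(W-2)}: the finitely many coefficients $W_{n,ij}$ are automatically bounded. Finally, the bijection \eqref{step-u} between $u_n(t)\in\R^n$ and $u_n(t,\cdot)\in X_n$ transfers the conclusion verbatim to the step-function formulation \eqref{dun}, \eqref{dun-ic}.
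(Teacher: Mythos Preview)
Your argument is correct and complete: you establish local existence and uniqueness via Lipschitz continuity of the vector field, then preclude finite-time blow-up using the boundedness of $D$ together with the at-most-linear growth of $f$, and invoke continuation.

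The paper's own proof takes a somewhat different route. It works in the step-function formulation \eqref{dun} from the start and shows directly that the right-hand side $R_n$ is globally Lipschitz on $X_n$ with a constant \emph{independent of $n$}, namely $L_f+L_D(\|W\|_{L^2(I^2)}+W_2)$; this uses the projection bound $\|W_n\|_{L^2}\le\|W\|_{L^2}$ and, crucially, condition \textbf{(W-2)} for the term $R^{(4)}$. Global existence is then deduced in one step from global Lipschitz continuity. What the paper's approach buys is the uniform-in-$n$ Lipschitz constant, which, while not needed for the lemma as stated, feeds into the uniform a priori estimates of \S\ref{sec.apriori} and the compactness argument in \S\ref{sec.exists}. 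Your approach, by contrast, is more self-contained for fixed $n$: as you correctly observe, neither \textbf{(W-1)} nor \textbf{(W-2)} is required, since the finitely many $W_{n,ij}$ are trivially bounded, and your separate non-blow-up step via $|D|\le 1$ makes the global-in-time conclusion fully transparent rather than folded into the Lipschitz estimate.
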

\begin{proof}
Denote the right-hand side of \eqref{dun} by $R_n(u_n(t,x))$. 
We show that $R_n$ is Lipschitz continuous with Lipschitz constant independent on $n$.
From \eqref{dun}, using triangle inequality, for $\bu_n, \bv_n\in C(0,T;X_n)$ we have
\begin{eqnarray}\lbl{use-triangle}
\|R_n(u_n(t,\cdot),t)-R_n(v_n(t,\cdot),t)\|& \le &\|f(u_n(t,\cdot),t)-f(v_n(t,\cdot),t)\|\\
\nonumber
&+& \left\|\int_I W_n(\cdot,y) \left[ D(u_n(t,y))-u_n(t,\cdot))- D(v_n(t,y))-v_n(t,\cdot))\right] dy\right\|\\
\nonumber
&=:& R^{(1)}+R^{(2)}.
\end{eqnarray}
Using Lipschitz continuity of $f$, we  have\footnote{Recall that $L_D$ and $L_f$ 
are Lipschitz constants of $D(u)$ and $f(u,t)$ as functions of $u$.}
\be\lbl{R-1}
R^{(1)}\le L_f \|u_n(t,\cdot)- v_n(t,\cdot) \|.
\ee
Using Lipschitz continuity of $D$ and the triangle inequality, we have
\be\lbl{R-2}
\begin{split}
R^{(2)} &\le  L_D \left\| \int_I W_n(\cdot, y)\left( \left| u_n(t,y)- v_n(t,y) \right| +
\left| u_n(t,\cdot)- v_n(t,\cdot) \right|\right)dy \right\|\\
&\le 
L_D\left( \left\| \int_I W_n(\cdot, y) \left| u_n(t,y)- v_n(t,y) \right| dy \right\|+
\left\| \int_I W_n (\cdot, y) dy \left| u_n(t,\cdot)- v_n(t,\cdot) \right| \right\|\right)\\
&=:  L_D\left(R^{(3)}+R^{(4)}\right).
\end{split}
\ee
By the Cauchy-Schwarz inequality, 
\be\lbl{R-3a}
R^{(3)} \le \left\| W_n \right\|_{L^2(I^2)} \|u_n(t,\cdot)- v_n(t,\cdot) \|.
\ee
Since $W_n$ is an $L^2$-projection of $W$ onto $X_n\otimes X_n$,
$\|W_n\|_{L^2(I^2)}\le \|W\|_{L^2(I^2)}$. Thus, \eqref{R-3a} yields
\be\lbl{R-3}
R^{(3)} \le \left\| W \right\|_{L^2(I^2)} \|u_n(t,\cdot)- v_n(t,\cdot) \|.
\ee
Finally, using (\textbf{W-2}), we estimate
\be\lbl{R-4}
R^{(4)}\le W_2 \|u_n(t,\cdot)- v_n(t,\cdot) \|.
\ee

The combination of \eqref{use-triangle}-\eqref{R-4} yields
\be\lbl{uniformly-Lip}
  \|R_n(u_n(t,\cdot))-R_n(v_n(t,\cdot))\|\le \left(L_f +L_D \left(\|W\|_{L^2(I^2)} +W_2^\prime\right) \right)
\|u_n(t,\cdot)- v_n(t,\cdot) \|,
\ee
i.e., $R_n$ is uniformly Lipschitz continuous. 
Recall that \eqref{dun} with the step functions \eqref{step-u} and \eqref{step-W}
is equivalent to the system of ordinary differential equations \eqref{gKM}. 
In turn, \eqref{uniformly-Lip} is equivalent to Lipschitz continuity of 
the right-hand side of \eqref{gKM}  with respect to discrete $L^2$-norm.
Thus, for every $n\in N,$  the IVP  \eqref{gKM}, \eqref{gKM-ic} has a unique solution,
which can be extended to $\R$.
\end{proof}

\subsection{A priori estimates} \lbl{sec.apriori}

Denote
$$
F:=\max_{t\in [0,T]} |f(0,t)|
$$
and recall \eqref{boundD}.
% Without loss of generality, assume 
% \be\lbl{D-bound}
% \max_{u\in \T} |D(u)| =1.
% \ee

\begin{lem}\lbl{lem.apriori}
There exist positive constants $C_1$ and $C_2$ depending on $T$ but not on $n$,
such that
\be\lbl{apriori}
\max_{t\in [0,T]} \|\un(t)\| \le C_1\quad\mbox{and}\quad \max_{t\in [0,T]} \|\un^\prime(t)\| \le C_2,
\ee
uniformly in $n$.
\end{lem}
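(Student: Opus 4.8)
The plan is to establish both bounds by a standard energy (a priori) argument applied directly to the Galerkin ODE system \eqref{gKM}, using the identification $\|\un(t)\|=\|u_n(t)\|_{2,n}$, the uniform bound $|D|\le1$ from \eqref{boundD}, the Lipschitz continuity of $f$, and—crucially for $n$-independence—condition \textbf{(W-2)}, which in discrete form reads $n^{-1}\sum_{j=1}^n W_{n,ij}\le W_2$ for every $i\in[n]$ (since $n^{-1}\sum_j W_{n,ij}=\int_0^1 W_n(x,y)\,dy$ for $x\in I_{n,i}$).

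First I would bound $\|\un(t)\|$. Multiplying \eqref{gKM} by $n^{-1}u_{n,i}$ and summing over $i$ gives
$$\tfrac12\tfrac{d}{dt}\|u_n\|_{2,n}^2 = n^{-1}\sum_{i=1}^n f(u_{n,i},t)u_{n,i} + n^{-2}\sum_{i,j=1}^n W_{n,ij}D(u_{n,j}-u_{n,i})u_{n,i}.$$
For the first term, the Lipschitz bound $|f(u,t)|\le F+L_f|u|$ together with the Cauchy--Schwarz inequality $n^{-1}\sum_i|u_{n,i}|\le\|u_n\|_{2,n}$ yields an upper bound $L_f\|u_n\|_{2,n}^2 + F\|u_n\|_{2,n}$. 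For the coupling term, $|D|\le1$ and \textbf{(W-2)} give an upper bound $W_2\|u_n\|_{2,n}$. Writing $y(t):=\|u_n(t)\|_{2,n}$, this produces the scalar differential inequality $\dot y\le L_f y+(F+W_2)$, which Gronwall integrates to $y(t)\le e^{L_f t}y(0)+L_f^{-1}(F+W_2)(e^{L_f t}-1)$. Since $u_{n,i}(0)=g_{n,i}=\langle g\rangle_{I_{n,i}}$ and averaging onto piecewise constants is an $L^2$-orthogonal projection, $y(0)=\|g_n\|\le\|g\|$ independently of $n$. Hence $\max_{t\in[0,T]}\|\un(t)\|\le C_1$ with $C_1$ depending only on $T,L_f,F,W_2,\|g\|$.

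Second, the bound on $\un'$ follows directly from the equation. Reading off $\dot u_{n,i}$ from \eqref{gKM} and using $|f(u_{n,i},t)|\le F+L_f|u_{n,i}|$, $|D|\le1$, and \textbf{(W-2)} pointwise gives $|\dot u_{n,i}|\le F+W_2+L_f|u_{n,i}|$. Taking the discrete $L^2$-norm and applying the triangle inequality yields $\|\un'(t)\|=\|u_n'(t)\|_{2,n}\le(F+W_2)+L_f\|u_n(t)\|_{2,n}\le(F+W_2)+L_f C_1=:C_2$, again uniform in $n$.

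The only genuine subtlety is ensuring the constants are truly independent of $n$, and this is precisely where \textbf{(W-2)} is indispensable: it controls $n^{-1}\sum_j W_{n,ij}$ uniformly in both $i$ and $n$, and the contraction $\|g_n\|\le\|g\|$ handles the initial data. Once these two facts are secured, both estimates reduce to elementary Gronwall and triangle-inequality arguments, so I expect no real obstacle beyond the bookkeeping of constants.
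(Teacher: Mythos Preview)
Your argument is correct and follows the same overall energy--Gronwall scheme as the paper, but you control the coupling term differently: you invoke \textbf{(W-2)} to bound $n^{-1}\sum_j W_{n,ij}\le W_2$ pointwise in $i$, whereas the paper works with the continuous form \eqref{dun}, applies Cauchy--Schwarz, and bounds the coupling term by $\|W_n\|_{L^2(I^2)}\le\|W\|_{L^2(I^2)}$. Your route is a bit more elementary and yields the linear inequality $\dot y\le L_f y+(F+W_2)$ directly; the paper's route does not need \textbf{(W-2)} at all for this lemma (only $W\in L^2$), which is a mild gain in generality even though \textbf{(W-2)} is assumed elsewhere anyway. Both choices propagate cleanly to the bound on $\un'$, so the difference is purely in which constant ($W_2$ versus $\|W\|_{L^2(I^2)}$) appears in $C_1,C_2$.
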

\begin{proof}{(Lemma~\ref{lem.apriori})}
Multiplying both sides of \eqref{dun} by $u_n(t,x)$ and integrating over $I$, we obtain
\be\lbl{bounding-un}
\begin{split}
{1\over 2}  {d\over dt} \| u_n(t,\cdot)\|^2 &\le \int_I |f(u_n(x,t),t)| |u_n(x,t)| dx 
+ \int_{I^2} | W_n(x,y)| | D\left(u_n(t,y)-u_n(t,x)\right)|  |u_n(t,x)| dxdy \\
& \le \int_I \left| f(u_n(x,t),t)-f(0,t)\right||u_n(x,t)| dx  + F\int_I |u_n(x,t)| dx \\
& + \int_{I^2} | W_n(x,y)| |u_n(t,x)| dxdy \\
&\le L_f \|u_n(t,\cdot)\|^2+ \left(F+\|W\|_{L^2(I^2)}\right) \left(\|u_n(t,\cdot)\|^2+1\right)\\
& \le \left(L_f +F+\|W\|_{L^2(I^2)}\right)\|u_n(t,\cdot)\|^2 + \left(F+\|W\|_{L^2(I^2)}\right),
\end{split}
\ee
where we used the Cauchy-Schwarz inequality and the bound 
$\|u_n(t,\cdot)\|\le \|u_n(t,\cdot)\|^2+1.$

Thus, 
\be\lbl{pre-G}
{d\over dt} \| u_n(t,\cdot)\|^2 \le C_3 \| u_n(t,\cdot)\|^2 +C_4,
\ee
with $C_3=2 \left(L_f +F+\|W\|_{L^2(I^2)}\right)$ and $C_4=2 \left(F+\|W\|_{L^2(I^2)}\right)$.
Using Gronwall's inequality and taking maximum over $t\in [0,T]$, we have
\be\lbl{bound-for-un}
\max_{t\in [0,T]} \| \un(t)\|^2\le e^{C_3T} \left( \|g\|^2 +C_4\right).
\ee
Here, we also used $\|\un(0)\|\le \|g\|$, because $\un(0)$ is an $L^2$-projection 
of $g$ onto $X_n$.

We now turn to bounding $\|\un^\prime(t)\|$. To this end, multiply \eqref{dun} by $v\in X$
and integrate both sides over $I$ to obtain
\begin{equation*}
\left( \un^\prime(t), v\right) = \int_I f\left(u_n(t,x)\right)v(x) dx+
\int\int_{I^2} W_n(x,y) D\left( u_n(t,x)-u_n(t,y)\right) v(x) dxdy.
\end{equation*}
Proceeding as in \eqref{bounding-un}, we obtain 
$$
\left|\left( \un^\prime(t), v\right)\right| \le \left(L_f+F +\|W\|_{L^2(I^2)} \right) \|v\|
\quad \forall v\in X.
$$
Thus, 
$$
\sup_{t\in \R} \|\un^\prime (t)\| \le C_2, \quad C_2:=L_f+F +\|W\|_{L^2(I^2)}.
$$  
\end{proof}

\subsection{Existence} \lbl{sec.exists}
With Lemma~\ref{lem.apriori} in hand, we are now ready to show existence of a weak
solution of \eqref{clim}. Furthermore, we show that the weak solution of 
\eqref{clim}
is the limit of the solutions of the discrete problems \eqref{dun}, i.e., the limit of solutions 
of \eqref{gKM}, \eqref{gKM-ic}. 

From Lemma~\ref{lem.apriori}, we have
\be\lbl{we-have}
\| \un\|_{C(0,T;X)} \le C_1,\quad \| \un(t+h)-\un(t)\| \le C_2 |h|.
\ee
From \eqref{we-have}, we further obtain
\be\lbl{pre-FK}
\| \un\|_{L^2(0,T;X)} \le C_1^2 T,\quad \int_0^T \| \un(t+h)-\un(t)\|^2dt \le C_2^2 h^2 T.
\ee
By the Frechet-Kolmogorov theorem \cite{Yosida-FA}, $\{\un\}$ is precompact in 
$L^2(0,T;X)$. Let $\{\mathbf{u}_{n_k}\}$ be a convergent subsequence of $\{\un\}$.
Denote its limit by $\bu$.

By Lemma~\ref{lem.apriori},
$$
\|\un^\prime\|_{L^2(0,T;X)} \le C_2\sqrt{T}.
$$
Therefore, $\{\mathbf{u}^\prime_{n_k}\}$ is weakly precompact in $L^2(0,T;X)$. 
Let $\{\mathbf{u}^\prime_{n_{k^\prime}}\}$ be a subsequence converging to 
$\mathbf{w}\in L^2(0,T;X)$. 

We show that $\mathbf{w}=\bu^\prime$.  Indeed, for arbitrary $\phi \in C_c^1(0,T)$
and $w\in X$, we have 
 \be\lbl{byparts} 
\int_0^T \left( \mathbf{u}_{n_{k^\prime}}^\prime(t), \phi(t)w\right) dt = -\int_0^T
\left( \mathbf{u}_{n_{k^\prime}}(t), \phi^\prime (t)w\right)  dt. 
\ee 
Sending $k^\prime \to\infty$ in (\ref{byparts}), and using
$
\mathbf{u}_{n_{k^\prime}}^\prime\rightharpoonup \mathbf{w}
$
and 
$
\mathbf{u}_{n_{k^\prime}}\rightharpoonup \bu,
$
we obtain
$$
\int_0^T\left(\mathbf{w}(t), \phi(t)w\right)=-\int_0^T\left(\mathbf{u}(t), \phi^\prime(t) w\right)dt.
$$
By \cite[Corollary~2]{Yosida-FA},
$$
\left( \int_0^T\mathbf{w}(t)\phi(t) dt, w\right)=\left(-\int_0^T\mathbf{u}(t)\phi ^\prime(t) dt, w\right)
\quad \forall w\in X.
$$
We conclude that $\mathbf{u}\in L^2(0,T;X)$ is weakly differentiable and
$\mathbf{u}^\prime=\mathbf{w}\in L^2(0,T;X).$ Thus, $\mathbf{u} \in H^1(0,T; X).$ 

Next, we show that $\mathbf{u} \in H^1(0,T; X)$ is a weak solution of \eqref{clim}, 
\eqref{clim-ic}. To this end, fix $N\in\N$ and choose a function of the form
\be\lbl{separable}
\mathbf{v}(t)=\sum_{j=1}^{N} d_j(t) \mathbf{\phi}_{N,j},
\ee
where $d_j(t)$ are continuously differentiable functions. Adding up \eqref{project} with
$n>N$ and $\mathbf{\phi}:=d_j(t)\phi_{nj}$ by $d_j(t),$ $j\in [n]$ and integrating the result
from $0$ to $T$, we obtain
\be\lbl{weak-Galerkin}
\int_0^T
(\mathbf{u^\prime}_n(t)-K(\mathbf{u}_n(t))-f(\mathbf{u}_n(t),t),
\mathbf{v}(t))dt=0,
\ee
where $\mathbf{v}$ is as in \eqref{separable}.
Passing to the limit along $n=n_k$, we have 
\be\lbl{w-limit}
\int_0^T (\mathbf{u}^\prime(t)-K(\mathbf{u}(t))-f(\mathbf{u}(t),t),\mathbf{v}(t)) dt=0. 
\ee 
This equality holds for an arbitrary $\mathbf{v}$ in the form of \eqref{separable}.
Since such functions for $N\in\N$ are dense in $L^2(0,T;X)$, we conclude
that \eqref{w-limit} holds for all $\mathbf{v}\in L^2(0,T;X)$.
Therefore, 
\be\lbl{weak-equality} 
(\mathbf{u^\prime}-K(\mathbf{u})-f(\mathbf{u},t),\mathbf{v})=0 \quad \forall 
\mathbf{v}\in L^2(0,T;X) \;\mbox{a.e. on}\; [0,T]
\ee 
In particular, \eqref{weak-equality} holds for any $\bv\in X$.

Next, we verify $\mathbf{u}(0)=g$.  From \eqref{weak-equality} for  
any $\mathbf{v}\in C^1(0,T;X)$ vanishing at $t=T$ 
via integration by parts we have
\be\lbl{ic1}
-\int_0^T\left(\mathbf{u}(t), \mathbf{v^\prime}(t)\right)dt=
\int_0^T \left( K(\mathbf{u}(t)) + f(\mathbf{u}(t),t),
\mathbf{v}(t)\right)dt+\left( \mathbf{u}(0), \mathbf{v}(0)\right).
\ee 
Likewise, by \eqref{weak-Galerkin},
\be\lbl{ic2} 
-\int_0^T\left(\mathbf{u}_{n_k}(t),\mathbf{v^\prime}(t)\right)dt= 
\left(K(\mathbf{u}_{n_k}(t)) + f(\mathbf{u}_{n_k}(t),t), \mathbf{v}(t)\right)dt+ 
\left(\mathbf{u}_{n_k}(0), \mathbf{v}(0)\right). 
\ee 
Passing to the limit (along a subsequence) in (\ref{ic2}) yields 
\be\lbl{ic2-limit}
-\int_0^T\left(\mathbf{u}(t), \mathbf{v^\prime}(t)\right)dt=
\int_0^T \left(K(\mathbf{u}(t)) + f(\mathbf{u}(t),t),
\mathbf{v}(t)\right)dt + \left( \mathbf{g}, \mathbf{v}(0)\right).
\ee 
As $\mathbf{v}(0)\in X$ is arbitrary, from \eqref{ic2} and \eqref{ic2-limit}
we conclude
$\mathbf{u}(0)=\mathbf{g}$. Thus, $\mathbf{u}$ is a weak solution of \eqref{clim}, 
\eqref{clim-ic}. 

\subsection{Uniqueness} \lbl{sec.unique}
Suppose the solution of the IVP \eqref{clim}, \eqref{clim-ic} is not unique. Then
there are two functions $\bu, \mathbf{w} \in H^1 (0,T; X)$ satisfying the same initial
condition $\bu(0)=\bv(0)$ and such that
\begin{eqnarray}
\lbl{solution-1}
(\mathbf{u^\prime}(t)-K(\mathbf{u}(t))-f(\mathbf{u}(t),t),\mathbf{v})&=&0,\\ 
\lbl{solution-2}
(\mathbf{w^\prime(t)}-K(\mathbf{w}(t))-f(\mathbf{w}(t),t),\mathbf{v})&=&0, \;\mbox{a.e. on}\; [0,T] . 
\end{eqnarray}
for any $\mathbf{v}\in L^2(0,T;X).$ 
Set $\mathbf{\xi}=\bu-\mathbf{w}$ and $\bv=\mathbf{\xi}$. After subtracting 
\eqref{solution-2} from \eqref{solution-1}, and using Lipschitz continuity of 
$f$ and $D$, we obtain
$$
{1\over 2} {d\over dt} \| \xi(t,\cdot)\|^2 \le L_f \|\xi(t,\cdot)\|^2 +
L_D\int_{I^2} \left| W(x,y)\right| \left( |\xi(t,y)|+ |\xi(t,x)|\right) |\xi(t,x)| dxdy.
$$
and, thus,
\be\lbl{Gronwall-for-uniqueness}
{d\over dt} \|\mathbf{\xi}(t)\|^2 \le \left( 2L_f+4L_D\|W\|_{L^2(I^2)}\right) \|\mathbf{\xi}(t)\|^2.
\ee
By Gronwall's inequality,
$$
\max_{t\in [0,T]} \|\mathbf{\xi}(t)\|^2 \le e^{ \left( 2L_f+4L_D\|W\|_{L^2(I^2)}\right)T} 
\|\mathbf{\xi}(0)\|^2=0.
$$
Thus, $\bu=\mathbf{w}$. By contradiction, there is a unique weak solution of the 
IVP \eqref{clim}, \eqref{clim-ic}.

The uniqueness of the weak solution entails $\mathbf{u}_n\to\mathbf{u}$
as $n\to\infty$. Indeed, suppose on the contrary that there exists a subsequence
$\mathbf{u}_{n_l}$ not converging to $\mathbf{u}$. Then for a given $\epsilon>0$
one can select a subsequence $\mathbf{u}_{n_{l_i}}$ such that
$$
\|\mathbf{u}_{n_{l_i}} -\mathbf{u}\|_{L^2(0,T;X)}>\epsilon \;\forall i\in\N.
$$
However, $\{\mathbf{u}_{n_{l_i}}\}$ is precompact in $L^2(0,T,X)$ and contains a
subsequence converging to a weak solution of \eqref{clim}, which must be
$\mathbf{u}$ by uniqueness. Contradiction.

\subsection{Convergence of solutions of the averaged equation}

We now show that like the solutions of the Galerkin problems, the solutions of the 
IVP for the averaged equation \eqref{dvn}, \eqref{dvn-ic} converge to the solution of the 
IVP for the continuum limit \eqref{clim}, \eqref{clim-ic}.

First, we need to develop several auxiliary estimates. For the truncated function $\tilde W$,
we have 
\be\lbl{bound-trunc}
\|\tilde W_n\|_{L^2(I^2)} \le \| W\|_{L^2(I^2)}.
\ee

\begin{lem}\lbl{lem.L2converge}
$$
\lim_{n\to\infty}\| \bar W_n - W\|_{L^2(I^2)}=0.
$$
\end{lem}
\begin{proof}
Since $W_n\to W$ in $L^2$-norm, it is sufficient to show that $\|\bar W_n- W_n\|_{L^2(I^2)}$
tends to $0$ as $n\to\infty$. 

Let $\epsilon>0$ be given. Since $W\in L^2(I^2)$, there is $\delta>0$ such that
\be\lbl{uni-cont}
\int_A W^2< \epsilon^2
\ee
for any $A\subset I^2$ of Lebesgue measure $|A|<\delta$.
For a given $\lambda>0,$ denote $A_\lambda=\{ (x,y)\in I^2:\; W(x,y)>\lambda\}$.
Since $W\in L^1(I^2)$, $W$ is finite a.e., i.e.,
there exists $\lambda>0$ such that
\be\lbl{choose-lambda}
|A_\lambda|\le \delta.
\ee
Let $N_\lambda\in\N$ such that
\be\lbl{choose-N}
\alpha^{-1}_n\ge \lambda\quad n\ge N_\lambda.
\ee

For $n\ge N_\lambda,$ we have
\begin{equation*}
\begin{split}
\| \bar W_n -W_n\|^2_{L^2(I^2)} 
&= \sum_{i,j=1}^n \int_{I_{n,i}\times I_{n,j}} \left( \bar W_n-W_n\right)^2 \\
&= \sum_{i,j=1}^n n^{-2} \left( n^2 \int_{I_{n,i}\times I_{n,j}} \left( \tilde W_n-W \right) \right)^2 \\
& = \sum_{i,j=1}^n n^2 \left(\int_{I_{n,i}\times I_{n,j}} \left( \tilde W_n-W \right) \right)^2 \\
&\le \sum_{i,j=1}^n \int_{I_{n,i}\times I_{n,j}} \left( \tilde W_n-W \right)^2\\
&= \int_{I^2} \left( \tilde W_n-W \right)^2= \int_{A_\lambda}\left( \tilde W_n-W \right)^2\\
&\le \int_{A_\lambda} W^2 \le \epsilon^2.
\end{split}
\end{equation*}
\end{proof}

Further, let 
\be\lbl{def-Kn}
K_n\left(  v \right) = \int_I \bar W_n(\cdot, y) D\left( v(y)-v(\cdot)\right) dy
\ee
be a nonlinear map from $X$ to itself.

\begin{lem}\lbl{lem.Kcont}
$K_n$ is a uniformly Lipschitz continuous map from $X$  to itself
\be\lbl{KLip}
\| K_n(v)-K_n(u)\| \le L_{K} \| v-u\| \quad \forall u, v\in X,
\ee
where $L_K =2 \|W\|_{L^2(I^2)} L_D.$ In addition,
\be\lbl{Kconverge}
\| K_n(v)-K(v)\|\le \|\bar W_n -W\|_{L^2(I^2)}\quad \forall v\in X.
\ee
\end{lem}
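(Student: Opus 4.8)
The plan is to prove the two assertions separately, starting with the convergence bound \eqref{Kconverge}, which is immediate, and then the Lipschitz bound \eqref{KLip}. For \eqref{Kconverge}, fix $v\in X$ and note that for a.e.\ $x\in I$
$$
K_n(v)(x)-K(v)(x)=\int_I \left(\bar W_n(x,y)-W(x,y)\right)D\left(v(y)-v(x)\right)\,dy.
$$
Invoking the normalization $|D|\le 1$ from \eqref{boundD} gives the pointwise estimate $|K_n(v)(x)-K(v)(x)|\le \int_I |\bar W_n(x,y)-W(x,y)|\,dy$. Squaring, integrating in $x$, and applying the Cauchy--Schwarz inequality in $y$ (using $|I|=1$) yields $\|K_n(v)-K(v)\|^2\le \int_{I^2}(\bar W_n-W)^2=\|\bar W_n-W\|_{L^2(I^2)}^2$. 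Since the right-hand side does not depend on $v$, this is precisely \eqref{Kconverge}.

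For \eqref{KLip}, fix $u,v\in X$ and write $w=v-u$. Using the Lipschitz bound $|D(v(y)-v(x))-D(u(y)-u(x))|\le L_D(|w(y)|+|w(x)|)$ together with $\bar W_n\ge 0$, I obtain the pointwise inequality
$$
|K_n(v)(x)-K_n(u)(x)|\le L_D\int_I \bar W_n(x,y)|w(y)|\,dy+L_D\,|w(x)|\int_I \bar W_n(x,y)\,dy.
$$
Taking the $X$-norm and using the triangle inequality splits the estimate into an off-diagonal and a diagonal term. The off-diagonal term is $L_D$ times the norm of the Hilbert--Schmidt integral operator with kernel $\bar W_n$ applied to $|w|$, hence at most $L_D\|\bar W_n\|_{L^2(I^2)}\|w\|$; here I use that $\bar W_n$ is the $L^2$-orthogonal projection of $\tilde W_n$ onto cellwise-constant functions (so that cellwise averaging is an $L^2$ contraction) and the truncation bound \eqref{bound-trunc} to conclude $\|\bar W_n\|_{L^2(I^2)}\le \|W\|_{L^2(I^2)}$. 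The diagonal term is controlled by \textbf{(W-2)}: since $\int_I \bar W_n(x,y)\,dy\le W_2$ uniformly in $x$ by \eqref{W1-W2}, it is at most $L_D W_2\|w\|$. Adding the two contributions gives the desired uniform Lipschitz estimate with $L_K$ as in \eqref{KLip}.

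Both computations are essentially routine; the only points needing care are keeping every bound uniform in $n$ and correctly separating the two mechanisms at work in \eqref{KLip}. The off-diagonal contribution is handled purely by the Hilbert--Schmidt norm of the kernel, for which the contraction property of cellwise averaging plus \eqref{bound-trunc} is exactly what makes $\|\bar W_n\|_{L^2(I^2)}\le\|W\|_{L^2(I^2)}$ hold uniformly; the diagonal contribution, involving $|w(x)|$, cannot be absorbed by the $L^2$ norm of the kernel and instead requires the uniform row-integral bound \textbf{(W-2)}. I expect no genuine obstacle, since the boundedness $|D|\le 1$, the Lipschitz property of $D$, and the $L^2$-contraction of the projection are all already available from the preceding setup.
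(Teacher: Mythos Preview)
Your argument for \eqref{Kconverge} is correct and identical to the paper's.

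For \eqref{KLip} your decomposition into off-diagonal and diagonal terms and the bound $L_D\|\bar W_n\|_{L^2(I^2)}\|w\|\le L_D\|W\|_{L^2(I^2)}\|w\|$ for the off-diagonal piece also match the paper. The difference is in the diagonal term: the paper bounds $\left\|\,|w(\cdot)|\int_I \bar W_n(\cdot,y)\,dy\,\right\|$ by $\|W\|_{L^2(I^2)}\|w\|$ via Cauchy--Schwarz as well, whereas you invoke \textbf{(W-2)} and obtain $W_2\|w\|$. Your route is perfectly legitimate and arguably more transparent, but the constant it produces is $L_D\bigl(\|W\|_{L^2(I^2)}+W_2\bigr)$, not the $L_K=2L_D\|W\|_{L^2(I^2)}$ stated in the lemma. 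Your closing sentence ``Adding the two contributions gives the desired uniform Lipschitz estimate with $L_K$ as in \eqref{KLip}'' is therefore not accurate as written.

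This is a cosmetic issue only: the sole use of \eqref{KLip} downstream (Lemma~\ref{lem.clarify}) needs any Lipschitz constant independent of $n$, and yours qualifies. So either (i) adjust your final sentence to record the constant you actually obtained and note that uniformity in $n$ is what matters, or (ii) follow the paper and bound the diagonal term by Cauchy--Schwarz in $y$, $\bigl(\int_I \bar W_n(x,y)\,dy\bigr)^2\le \int_I \bar W_n(x,y)^2\,dy$, to recover $\|W\|_{L^2(I^2)}$ there too.
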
 
\begin{proof}
Using Lipschitz continuity of $D$, Cauchy-Schwartz inequality, and 
\eqref{bound-trunc}, we have
\begin{equation*}
\begin{split}
\| K_n(u)-K_n(v)\| & \le \left\| \int_I \bar W_n(\cdot,y) \left\{ D\left( u(y)- u(\cdot)\right)
-D\left( v(y)- v(\cdot)\right) \right\} dy \right\|\\
& \le L_D\left\{ \left\| \int_I \bar W_n(\cdot,y)  \left| u(y)- v(y) \right| dy \right\| 
+ \left\| \int_I \bar W_n(\cdot,y)  \left| u(\cdot)- v(\cdot) \right| dy \right\| \right\}\\
&\le 2L_D \| W\|_{L^2(I^2)} \|u-v\|.
\end{split}
\end{equation*}
To show \eqref{Kconverge}, we use \eqref{boundD} and Cauchy-Schwartz inequality:
\begin{equation*}
\begin{split}
\| K_n(v)-K(v)\| &\le \left\| \int_I\left( \bar W_n(\cdot,y) -W(\cdot,y)\right) D\left(v(y)-v(\cdot)\right)
dy\right\|\\
&\le \left\| \int_I\left| \bar W_n(\cdot,y) -W(\cdot,y)\right| dy\right\|\\
&\le \|\bar W_n -W\|_{L^2(I^2)}.
\end{split}
\end{equation*}
\end{proof}

We rewrite the averaged equation \eqref{dvn} as 
\be\lbl{weak-ave}
\left( \bv_n^\prime (t) -K_n( \bv_n(t)) -f(\bv(t)),\phi \right) =0\quad \forall \phi\in X_n.
\ee
subject to the initial condition
\be\lbl{weak-ave-ic}
\bv_n (0) =\sum_{i=0}^n g_{n,i} \phi_{n,i}.
\ee
We want to show that $\bv_n\to \bu$ in $L^2(0,T; X)$. 
To this end, note that a priori estimates in \S\ref{sec.apriori} hold for the averaged problem
\eqref{weak-ave-ic} due to \eqref{bound-trunc}. The rest of the proof 
is done by following the lines 
of the existence and uniqueness proof in \S\S~\ref{sec.exists}, \ref{sec.unique}.
The only place, which requires a clarification  is the following limit \footnote{
This limit is used in \eqref{w-limit} and \eqref{ic2-limit}.}.
\begin{lem}\lbl{lem.clarify}
\be\lbl{clarify-the-limit}
\int_0^T \left( K_n(\bv_n(t)), \bv(t) \right)dt \to \int_0^T \left( K (\bv (t)), \bv (t) \right) dt 
\ee
for any $v\in C^1(0,T; X),$ provided that $\bv_n\to \bu$ in $L^2(0,T; X)$.
\end{lem}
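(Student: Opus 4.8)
The plan is to reduce the claim to the two convergence facts already established: the strong $L^2(0,T;X)$ convergence $\bv_n\to\bu$ supplied by hypothesis, and the kernel convergence $\|\bar W_n-W\|_{L^2(I^2)}\to 0$ from Lemma~\ref{lem.L2converge}. Because $K_n$ is \emph{nonlinear}, one cannot simply pass to the limit using weak convergence of $\bv_n$; instead the argument should be routed through the uniform-in-$n$ Lipschitz bound for $K_n$ in Lemma~\ref{lem.Kcont}. The key device is the splitting
\be
K_n(\bv_n(t))-K(\bu(t))=\bigl[K_n(\bv_n(t))-K_n(\bu(t))\bigr]+\bigl[K_n(\bu(t))-K(\bu(t))\bigr],
\ee
which separates the dependence on the solution from the dependence on the kernel.

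First I would bound the first bracket by the Lipschitz estimate \eqref{KLip}, obtaining $\|K_n(\bv_n(t))-K_n(\bu(t))\|\le L_K\|\bv_n(t)-\bu(t)\|$, where $L_K=2\|W\|_{L^2(I^2)}L_D$ is \emph{independent of $n$}. For the second bracket I would invoke \eqref{Kconverge}, giving $\|K_n(\bu(t))-K(\bu(t))\|\le\|\bar W_n-W\|_{L^2(I^2)}$, a bound uniform in $t$ and in the argument. Pairing against the test function $\bv(t)$, integrating over $[0,T]$, and applying Cauchy--Schwarz both in $X$ and in time, I arrive at
\be
\left|\int_0^T\bigl(K_n(\bv_n(t))-K(\bu(t)),\bv(t)\bigr)\,dt\right|
\le L_K\,\|\bv_n-\bu\|_{L^2(0,T;X)}\,\|\bv\|_{L^2(0,T;X)}
+\|\bar W_n-W\|_{L^2(I^2)}\sqrt{T}\,\|\bv\|_{L^2(0,T;X)}.
\ee
Since $\bv\in C^1(0,T;X)$ is bounded, $\|\bv\|_{L^2(0,T;X)}<\infty$; the first term then vanishes by the hypothesis $\bv_n\to\bu$ in $L^2(0,T;X)$, and the second by Lemma~\ref{lem.L2converge}. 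This yields the asserted limit.

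The only delicate point is that the coupling is nonlinear, so the passage to the limit cannot be carried out by weak convergence alone; this is precisely why I combine the uniform-in-$n$ Lipschitz bound of Lemma~\ref{lem.Kcont} with the \emph{strong} $L^2(0,T;X)$ convergence of $\bv_n$. Two features make the estimate close: the Lipschitz constant $L_K$ does not depend on $n$, so the solution error enters multiplied by a fixed constant; and the kernel discrepancy bound \eqref{Kconverge} is independent of the point at which $K_n$ and $K$ are evaluated, so it factors out of the time integral. Finally, I would remark that the smoothness $\bv\in C^1(0,T;X)$ is used only to guarantee $\|\bv\|_{L^2(0,T;X)}<\infty$, so in fact membership $\bv\in L^2(0,T;X)$ already suffices.
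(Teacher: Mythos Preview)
Your proof is correct and follows essentially the same route as the paper: the identical splitting $K_n(\bv_n)-K(\bu)=[K_n(\bv_n)-K_n(\bu)]+[K_n(\bu)-K(\bu)]$, the uniform Lipschitz bound \eqref{KLip} on the first bracket, the kernel estimate \eqref{Kconverge} on the second, and Cauchy--Schwarz in space and time to factor out $\|\bv\|_{L^2(0,T;X)}$. Your version is in fact slightly more careful, retaining the $\sqrt{T}$ factor on the second term that the paper's display suppresses.
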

\begin{proof}
\be\lbl{I1+I2}
\begin{split}
\left|\int_0^T \left( K_n(\bv_n(t)) -K(\bu(t)), \bv(t) \right) dt \right| &\le 
\int_0^T \left| \left( K_n(\bv_n(t))-K_n(\bu(t)), \bv(t)\right) \right| dt\\
& +\int_0^T \left| \left( K_n(\bu(t))-K(\bu(t)), \bv(t)\right) \right| dt 
=: I_1+I_2.
\end{split}
\end{equation}
Using \eqref{KLip} and Cauchy-Schwartz inequality, we have
\be\lbl{do-I1}
\begin{split}
I_1 &= \int_0^T \| K_n(\bv_n(t))-K_n(\bu(t))\| \|\bv(t)\| dt \\
&\le L_K \left( \int_0^T \|\bv_n(t)-\bu(t)\|^2dt \right)^{1/2} \|\bv \|_{L^2(0,T;X)}\\
& \le L_K \|\bv_n(t)-\bu(t)\|_{L^2(0,T;X)} \|\bv \|_{L^2(0,T;X)}.
\end{split}
\ee
Similarly, using \eqref{Kconverge} and the Cauchy-Schwartz inequality, we further obtain
\be\lbl{do-I2}
\begin{split}
I_2 &= \int_0^T \| K_n(\bu(t))-K(\bu(t))\| \|\bv(t)\| dt \\
& \le \|\bar W_n-W\|_{L^2(I^2)} \|\bv \|_{L^2(0,T;X)}.
\end{split}
\ee
Plugging \eqref{do-I1} and \eqref{do-I2} in  \eqref{I1+I2}, we obtain
\begin{equation*}
\begin{split}
\left|\int_0^T \left( K_n(\bv_n(t)) -K(\bu(t)), \bv(t)\right) dt \right| & \le
\left(L_K \| \bv_n(t)-\bu(t)\|_{L^2(0,T;X)} \right.\\
&\left. +\|\bar W_n-W\|_{L^2(I^2)} \right)
\|\bv \|_{L^2(0,T;X)}.
\end{split}
\end{equation*}
The statement of the lemma follows the above inequality and Lemma~\ref{lem.L2converge}.
\end{proof}

\vskip 0.2cm
\noindent
{\bf Acknowledgements.}
The author thanks Gideon Simpson for insightful comments, which helped to improve the manuscript. 
This work was supported in part by the NSF grant DMS 1715161.

\vfill\newpage

\def\cprime{$'$} \def\cprime{$'$} \def\cprime{$'$}
\providecommand{\bysame}{\leavevmode\hbox to3em{\hrulefill}\thinspace}
\providecommand{\MR}{\relax\ifhmode\unskip\space\fi MR }
% \MRhref is called by the amsart/book/proc definition of \MR.
\providecommand{\MRhref}[2]{%
  \href{http://www.ams.org/mathscinet-getitem?mr=#1}{#2}
}
\providecommand{\href}[2]{#2}

% \bibliographystyle{amsplain}
% \bibliography{knet1}
\end{document}